\newtheorem {theorem}    {Theorem}[section]
\newtheorem {lemma}      [theorem]    {Lemma}
\newtheorem {corollary}  [theorem]    {Corollary}
\newtheorem {conjecture} [theorem] {Conjecture}
\theoremstyle{definition}
\newtheorem{remark}[theorem]{Remark}
\def\Z{\mathbb{Z}}
\def\R{\mathbb{R}}
\def\C{\mathbb{C}}
\def\Mg{\mathfrak{g}}
\def\a{\alpha}
\numberwithin{equation}{section}
\begin{document}

\title[Eisenstein series on rank $2$ hyperbolic Kac--Moody groups]{Eisenstein series on rank $2$ hyperbolic Kac--Moody groups}

\date{\today}

\author[Lisa Carbone]{Lisa Carbone$^{\dagger}$}
\address{Department of Mathematics \\ Rutgers University \\ Piscataway, NJ 08854-8019.}
\email{carbonel@math.rutgers.edu}
\thanks{$^{\dagger}$This work was supported in part by NSF grant \#DMS--1101282.}

\author[Kyu-Hwan Lee]{Kyu-Hwan Lee$^{\star}$}
\thanks{$^{\star}$This work was partially supported by a grant from the Simons Foundation (\#318706).}
\address{Department of
Mathematics, University of Connecticut, Storrs, CT 06269, U.S.A.}
\email{khlee@math.uconn.edu}

\author[Dongwen Liu]{Dongwen Liu$^{\star\star}$}
\thanks{$^{\star\star}$This work was partially supported by NSFC \#11201384.}
\address{Department of
Mathematics, University of Connecticut, Storrs, CT 06269, U.S.A.}
\email{dongwen.liu@uconn.edu}

\subjclass[2010]{Primary 20G44; Secondary 11F70}

\begin{abstract}  We define Eisenstein series on rank $2$ hyperbolic Kac--Moody  groups over $\R$, induced from quasi--characters. We prove convergence of the constant term and hence the almost everywhere convergence of the Eisenstein series. We define and calculate the degenerate Fourier coefficients. We also consider Eisenstein series induced from cusp forms and show that these  are entire functions.

\end{abstract}

\maketitle

\section{Introduction}\label{sec:intro}

After being developed by Langlands \cite{La1, La2} in great generality,  the theory of Eisenstein series has played a fundamental role in the formulation of  the Langlands functoriality conjecture and in the study of $L$-functions by means of the Langlands--Shahidi method. Eisenstein series also appear in many other places throughout number theory and representation theory. The scope of applications is being extended to geometry and mathematical physics. On the other hand, since we have seen  many successful generalizations of finite dimensional constructions to infinite dimensional  Kac--Moody groups \cite{K, Ku}, it is a natural question to ask whether one can generalize the theory of Eisenstein series to Kac--Moody groups. Such an attempt is not merely for the sake of generalization. Even though it is hypothetical for the present, a satisfactory theory of Eisenstein series on Kac--Moody groups would have significant impact on some of the central problems in number theory \cite{BFH,Sh}.

In pioneering work,  H. Garland developed a theory of Eisenstein series for the affine Kac--Moody groups over $\R$ in a series of papers \cite{G99, G04, G06, GMS1, GMS2, GMS3, GMS4, G11}, and he established absolute convergence and meromorphic continuation. The absolute convergence result has been generalized to the case of number fields by D. Liu \cite{Li}. In a recent preprint \cite{GMP}, Garland, Miller and Patnaik  showed that Eisenstein series  induced from cups forms  are entire functions. Garland's idea was extended to the function field case by Kapranov \cite{Ka} through geometric methods and was systematically developed by Patnaik \cite{P}. An algebraic approach to this case was made by K.-H. Lee and Lombardo \cite{LL}. Braverman and Kazhdan's recent preprint \cite{BK} announces more results in the function field case.

The purpose of this paper is to construct Eisenstein series on rank $2$ hyperbolic Kac--Moody groups over $\R$, generalizing Garland's work in the affine case. The rank $2$ hyperbolic Kac--Moody groups form the first family beyond the affine case. However, contrary to the affine case, our understanding of hyperbolic Kac--Moody groups (and algebras) is far from being complete. In particular, information regarding imaginary root multiplicities of hyperbolic Kac--Moody algebras is limited. A recent survey on this topic can be found in \cite{CFL}.

Nevertheless, we have the necessary information to construct Eisenstein series induced from quasi--characters on rank $2$ hyperbolic Kac--Moody groups and to prove their almost everywhere convergence, thanks to the works of Lepowsky and Moody \cite{LM}, Feingold \cite{Fein} and Kang and Melville \cite{KM}. We can also prove entirety of the Eisenstein series induced from cusp forms. Indeed, one of the benefits of working in the Kac--Moody group rather than its Kac--Moody algebra, is that the group is generated by root groups corresponding to only `real' roots. The `real' part of  Kac--Moody  is sufficiently well understood and carries many properties similar to finite dimensional simple Lie algebras \cite{CG1}.

We assume that $G$ is a rank $2$ hyperbolic Kac--Moody group attached to a symmetric $2\times 2$ generalized Cartan matrix, and we define Eisenstein series on the `arithmetic' quotient $K(G_\R)\backslash G_\R/G_\Z$, where $K=K(G_\R)$ is the unitary form of $G$, an infinite dimensional analogue of a maximal compact subgroup. Our method is to choose a quasi--character $\nu$ on a Borel subgroup and then extend it to the whole of $G_\R$ via Iwasawa decomposition $G_\R=KA^+N$, which is given uniquely. Here $A^+\cong (\mathbb{R}^+)^{rank(G)}$ is an abelian subgroup and $N$ is the completion of the subgroup generated by all positive  real root groups.

We then average over an appropriate quotient of $G_\Z$  to obtain a $G_\Z$--invariant function $E_{\nu}(g)$ on $K\backslash G_\R/G_\Z$.  Our first main result is:
\begin{theorem}
Assume that $\nu$ satisfies Godement's criterion, and consider the cone \[A'=\{a \in A^+ : a^{\alpha_i} <1, i=1,2 \}.\] Then for any compact subset $A_c'$ of $A'$, there is a measure zero subset $N_0$ of $N$ such that $E_\nu(g)$ converges absolutely for $g\in KA_c' N'$, where $N'=N-N_0$.
\end{theorem}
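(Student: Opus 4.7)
The plan is to reduce almost everywhere convergence of $E_\nu$ on $KA_c'N'$ to the finiteness of a nonnegative integral over a fundamental domain, adapting Godement's strategy as used by Garland in the affine setting.

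First, one writes
\[
E_\nu(g) = \sum_{\gamma \in \Gamma_B \backslash G_\Z} \Phi_\nu(\gamma g),
\]
where $\Phi_\nu$ is the extension of $\nu$ through the Iwasawa decomposition and $\Gamma_B$ is an appropriate arithmetic Borel subgroup. Since $|\Phi_\nu| = \Phi_{\mathrm{Re}\,\nu}$, the analysis reduces to a nonnegative sum. My goal is to show
\[
\int_{A_c'} \int_{N_\Z \backslash N} \sum_{\gamma} \bigl| \Phi_\nu(\gamma a n) \bigr| \, dn \, da < \infty.
\]
By Fubini--Tonelli this yields finiteness of the series for almost every $(a,n) \in A_c' \times (N_\Z \backslash N)$. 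A monotonicity argument on the Iwasawa $A$-component --- the $a$-part of $\gamma a n$ depends log-linearly on $\log a$ --- should then let one extract a single exceptional set $N_0 \subset N$ of measure zero, independent of $a \in A_c'$.

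Second, I would unfold the inner integral using the Bruhat decomposition of $G_\Z$ and the quasi-invariance of $\Phi_\nu$ under left translation by $B_\R$. This should produce a Gindikin--Karpelevich-type expression for the constant term,
\[
\int_{N_\Z \backslash N} \sum_\gamma |\Phi_\nu(\gamma a n)|\, dn = \sum_{w \in W} \bigl| a^{w \nu + \rho} \bigr| \, c_w(\mathrm{Re}\,\nu), \qquad c_w(\mathrm{Re}\,\nu) = \prod_{\substack{\alpha > 0 \\ w^{-1}\alpha < 0}} c_\alpha(\mathrm{Re}\,\nu),
\]
where $W$ is the infinite dihedral Weyl group and each $c_\alpha$ is a Beta-type integral which is finite precisely because $\nu$ satisfies Godement's criterion. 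Convergence of the individual summands is thus automatic; the real question is convergence of the sum over $W$.

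Third --- and this is the crux --- I would bound the $W$-indexed sum by a geometric majorant. Each $|a^{w\nu+\rho}|$ can be rewritten as a product $\prod_{\alpha \in \Phi_w^+} |a^\alpha|^{\epsilon_\alpha}$ over the finite set $\Phi_w^+ = \{\alpha > 0 : w^{-1}\alpha < 0\}$, with exponents $\epsilon_\alpha > 0$ supplied by Godement's criterion. Since $A_c' \subset A'$ is compact, there exists $0 < c < 1$ with $|a^{\alpha_i}| \leq c$ uniformly on $A_c'$, giving exponential decay of $|a^\alpha|$ in the height of $\alpha$. I would then invoke the root-multiplicity bounds of Lepowsky--Moody, Feingold, and Kang--Melville for rank $2$ symmetric hyperbolic Kac--Moody algebras, which bound $m_\alpha$ by an exponential in $\mathrm{ht}(\alpha)$ with a base strictly smaller than the contraction rate, and verify that the full sum is dominated by a convergent double series in $\mathrm{ht}(\alpha)$ and in the length of $w$.

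The main obstacle is this third step. In the affine case all imaginary roots are proportional to a single null vector with constant multiplicity, and the sum essentially collapses to a geometric series in $a^\delta$. In the rank $2$ hyperbolic setting the imaginary cone is two-dimensional, multiplicities grow exponentially with height, and one must carefully reconcile the growth of $|\Phi_w^+|$ counted with multiplicity against the decay furnished by $a \in A'$. This is precisely what forces the strict contraction $a^{\alpha_i} < 1$ on \emph{both} simple roots in the definition of $A'$ and is where the available rank $2$ hyperbolic multiplicity data must be used in an essential way.
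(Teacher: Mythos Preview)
Your overall architecture---integrate the nonnegative series over $A_c' \times (N/\Gamma\cap N)$, apply Tonelli, and reduce to absolute convergence of the constant term $\sum_{w\in W} a^{w(\nu+\rho)-\rho}\, c(\nu,w)$ obtained via Gindikin--Karpelevich---is exactly what the paper does. The divergence is entirely in your third step, and it is a genuine misconception rather than a gap to be filled.

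The set $\Phi_w^+ = \{\alpha>0 : w^{-1}\alpha<0\}$ consists \emph{only of real roots}: the Weyl group preserves the set of positive imaginary roots, so no imaginary root is ever sent to a negative one. Hence $|\Phi_w^+| = \ell(w)$ on the nose, each root with multiplicity one, and the imaginary root multiplicity estimates of Lepowsky--Moody, Feingold, and Kang--Melville play no role whatsoever in bounding the constant term. Your concern about ``$|\Phi_w^+|$ counted with multiplicity'' and the ``two-dimensional imaginary cone'' is simply misplaced here. Separately, the proposed rewriting $a^{w(\nu+\rho)-\rho} = \prod_{\alpha\in\Phi_w^+}(a^\alpha)^{\epsilon_\alpha}$ with $\epsilon_\alpha>0$ is not obviously valid for the $\nu$-part and would itself require proof; even granting it, you would still need quantitative lower bounds on the heights of the roots in $\Phi_w^+$.

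What the paper actually extracts from the rank~$2$ hyperbolic structure is the \emph{explicit} description of the $W$-action on the root lattice via the recursively defined sequences $B_n\sim \gamma^{n+1}/(\gamma^2-1)$ and $A_n\sim \gamma^{n+2}/((\gamma+1)(\gamma-1)^2)$, where $\gamma=(m+\sqrt{m^2-4})/2>1$. Writing $\nu=s_1\alpha_1+s_2\alpha_2$, one computes directly that the coefficients of $w(\nu+\rho)-\rho$ in $\alpha_1,\alpha_2$ are of size $C_\nu\,\gamma^{\ell(w)}$, and a short linear-algebra calculation shows Godement's criterion forces $C_\nu>0$. Since $a\in A'$ gives $a^{\alpha_i}<1$, the term $a^{w(\nu+\rho)-\rho}$ decays doubly exponentially in $\ell(w)$, while $|c(\nu,w)|\le C_\varepsilon^{\ell(w)}$ grows only singly exponentially. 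That mismatch is the entire mechanism of convergence over the infinite dihedral group; imaginary roots never enter.
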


Although the idea of the proof is similar to that of \cite{G04}, our proof heavily depends on a concrete description of root systems of rank $2$ hyperbolic Kac--Moody algebras. We compute the constant term of the series $E_{\nu}(g)$ and show that the constant term is absolutely convergent, which implies almost everywhere convergence of the series.  We conjecture that the Eisenstein series actually converges everywhere under a weaker condition than Godement's criterion (See Conjecture \ref{conj-every}). As the argument in \cite{G06} does not generalize to the hyperbolic case, the conjecture seems out of reach at the current time.

We also calculate Fourier coefficients of the Eisenstein series in Section \ref{sec-df}. Let $\psi$ be a non-trivial character of $N/(G_\Z\cap N)$.  Then we can write $\psi=\psi_1\psi_2$, where
$\psi_i$ corresponds to the simple root $\alpha_i$ for $i=1,2$.  We call $\psi$ {\em generic} if each $\psi_i$ is non-trivial for $i=1,2$.
We first show that the Fourier coefficients attached to generic characters vanish (Lemma \ref{vanishing}). Then we consider
characters of the form $\psi=\psi_i$ (i.e. either $\psi_1$ or $\psi_2$ is trivial) and compute the corresponding Fourier coefficients.
The resulting formula is an infinite sum of products of the $n$-th Whittaker coefficient of the analytic Eisenstein series on $SL_2$ and quotients of the completed Riemann zeta function (Theorem \ref{thm-df}).

The next main result  is the entirety of the Eisenstein series $E_{s,f}(g)$ induced from a cusp form $f$. Our approach is similar to that of Garland, Miller and Patnaik in \cite{GMP}; however, our method requires us  to use information about the structure of  the root system of $G$. We obtain:

\begin{theorem} Let $f$ be an unramified cusp form on $SL_2$. For any compact subset $A_c'$ of $A'$, there is a measure zero subset $N_0$ of $N$ such that
$E_{s,f}(g)$ is an entire function of $s\in \mathbb{C}$ for $g\in KA_c'N'$, where $N'=N-N_0$.
\end{theorem}

As mentioned earlier, rank $2$ hyperbolic Kac--Moody algebras and groups form the first family beyond the affine case. It would be interesting to generalize the results of this paper to other hyperbolic Kac--Moody groups, for example, to the Kac--Moody group corresponding to the Feingold and Frenkel's rank $3$ hyperbolic Kac--Moody algebra \cite{FF}. Actually, in a subsequent paper \cite{CGLLM} with Miller and Garland, we will prove that for general Kac--Moody groups, the Eisenstein series  $E_{\nu}(g)$ converges almost everywhere in  the full region satisfying the Godement's criterion $\mathrm{Re}\,\nu(h_{\alpha_i})< -2$. It will be very exciting to see further developments toward a satisfactory theory of Eisenstein series on Kac--Moody groups.

\section{Rank $2$ hyperbolic Kac--Moody algebras and $\mathbb Z$-forms} \label{Zform}

Let $\mathfrak g=\Mg_{\C}$ be the rank $2$  hyperbolic Kac--Moody algebra associated with the symmetric generalized Cartan matrix
\[ \begin{pmatrix}
~2 & -m \\
 -m & ~2 \end{pmatrix} , \qquad  m \ge 3 .\]  Let $\mathfrak{h}=\mathfrak{h}_{\C}$ be a Cartan subalgebra.  Let $\Phi$ be the corresponding root system and let $\Phi_{\pm}$ denote the positive and negative roots respectively. Let
$$\Mg\ =\ \Mg^-\oplus \mathfrak{h} \oplus \Mg^+$$
be the triangular decomposition of $\Mg$, where
$$\Mg^-=\bigoplus_{\a\in\Phi_-}\Mg_{\a},\quad \Mg^+=\bigoplus_{\a\in\Phi_+}\Mg_{\a}.$$

Let $W=W(A)$ be the Weyl group of $\Mg$. We have
 $$W(A)=\langle r_1,r_2\mid r_1^2=1,r_2^2=1\rangle$$ which is the infinite dihedral group
$$W={\Z}/2{\Z} \ast{\Z}/2{\Z} \cong{\Z}\rtimes \{\pm 1\},$$
where $\langle (r_1r_2)\rangle\cong{\Z}$.

 A root $\alpha\in\Phi$ is called a {\it real root} if there exists $w\in W$ such that $w\alpha$ is a
simple root. A root $\alpha$ which is not real is called {\it
imaginary}. We denote by $\Phi^{re}$ the real roots and  $\Phi^{im}$ the
imaginary roots.

Set $I=\{ 1,2\}$. We let $\Lambda\subseteq \mathfrak{h}^{\ast}$ be the $\R$--linear span of  the simple roots $\alpha_i$, for $i\in I$, and $\Lambda^{\vee}\subseteq
\mathfrak{h}$ be the $\R$--linear span of the simple coroots
$h_{\alpha_i}$, for $i\in I$. Let $e_i=e_{\alpha_i}$ and $f_i=f_{\alpha_i}$ be root vectors in $\Mg$ corresponding to  simple roots $\alpha_i$, $i \in I$.
Let ${\mathcal U}_\C$, ${\mathcal U}^+_\C$ and ${\mathcal U}^-_\C$ be the universal enveloping algebras of
$\mathfrak{g}$, $\mathfrak{g}^{+}$ and $\mathfrak{g}^{-}$ respectively. We define the following $\mathbb Z$-subalgebras: Let
\begin{enumerate}
\item ${\mathcal U}^+_{{\Z}}\subseteq {\mathcal U}^+_{{\C}}$ be the ${\Z}$-subalgebra generated by $\dfrac{e_i^n}{n!}$ for $i\in I$
and
$n\geq 0$,

\item ${\mathcal U}^-_{{\Z}}\subseteq {\mathcal U}^-_{{\C}}$ be the ${\Z}$-subalgebra generated by $\dfrac{f_i^{n}}{n!}$ for $i\in I$
and
$n\geq 0$,

\item ${\mathcal U}^0_{{\Z}}\subseteq {\mathcal U}(\mathfrak{h}_{\C})$ be the ${\Z}$-subalgebra generated by $\left (\begin{matrix}
h \\ n\end{matrix}\right )$, for
$h\in\Lambda^{\vee}$ and $n\geq 0$, where
$$\left (\begin{matrix}h\\ n\end{matrix}\right )=\dfrac{h(h-1)\dots (h-n+1)}{n!},$$

\item ${\mathcal U}_{{\Z}}\subseteq {\mathcal U}_{{\C}}$ be the ${\Z}$-subalgebra generated by $\dfrac{e_i^{n}}{n!}$,
$\dfrac{f_i^{n}}{n!}$ for $i\in I$ and $\left (\begin{matrix}
h\\ n\end{matrix}\right )$, for
$h\in\Lambda^{\vee}$ and
$n\geq 0$.
\end{enumerate}
 It follows (\cite{Ti1}) that ${\mathcal U}_{\Z}$ is a {\em ${\Z}$-form} of ${\mathcal U}_{\C}$, i.e. the
canonical map $$\mathcal{U}_{\Z}\otimes_{\Z}\C\longrightarrow \mathcal{U}_\C$$ is bijective.

Recall that $\mathfrak{g}^+ =\bigoplus_{\alpha\in\Phi_+}\mathfrak{g}_{\alpha}$. Let $V$ be a  representation of $\Mg$. Then $V$ is called a {\it highest weight representation} with highest weight $\lambda\in\mathfrak{h}^{\ast}$ if there exists $0\neq v_{\lambda}\in V$ such that
$$\mathfrak{g}^{+}(v_{\lambda})=0,$$
$$h(v_{\lambda})={\lambda}(h)v_{\lambda}$$
for $h\in\mathfrak{h}$ and
$$V=\mathcal {U}_\C \cdot v_{\lambda} .$$
Since $\mathfrak{g}^{+}$ annihilates $v_{\lambda}$ and $\mathfrak{h}$ acts as scalar multiplication on $v_{\lambda}$, we have
$$V=\mathcal {U}^-_\C \cdot v_{\lambda} .$$
We write $V=V^{\lambda}$ for the unique irreducible highest weight module with highest weight $\lambda$. We will assume that  $V=V^{\lambda}$ is integrable.

  We shall construct a lattice $V_{\Z}$ in $V$ by taking the orbit of a highest weight vector $v_{\lambda}$ under $\mathcal{U}_{\Z}$. We have $\mathcal{U}^+_{\Z} \cdot v_{\lambda}=\Z v_{\lambda}$
since all elements of ${\mathcal U}^+_{{\Z}}$ except for 1 annihilate $v_{\lambda}$. Also ${\mathcal U}^0_{{\Z}}$ acts as scalar multiplication on $v_{\lambda}$ by a $\Z$-valued scalar, since  $\left (\begin{matrix}
h\\ n\end{matrix}\right )$ for
$h\in\Lambda^{\vee}$ and $n\geq 0$ acts on $v_{\lambda}$ as
$$\left (\begin{matrix}\lambda(h)\\ n\end{matrix}\right )=\dfrac{\lambda(h)(\lambda(h)-1)\dots (\lambda(h)-n+1)}{n!}\in\Z.$$
Thus
$$\mathcal{U}^0_{\Z} \cdot v_{\lambda}=\Z v_{\lambda}, \qquad
 \mathcal{U}_{\Z}\cdot v_{\lambda}=\mathcal{U}^-_{\Z}\cdot (\Z v_{\lambda})=\mathcal{U}^-_{\Z}\cdot v_{\lambda}.$$

Let $\alpha$ be any real root and let $e_{\alpha}$ and $f_{\alpha}$ be root vectors corresponding to $\alpha$.  Then
\[  \dfrac{f_{\alpha}^n}{n!}v_{\lambda}\in V_{\lambda-n\alpha}.\]
For a weight $\mu<\lambda$ we have
\[ \dfrac{e_{\alpha}^n}{n!}v_{\mu}\in V_{\mu+n\alpha}.\]
   We set
$$V_\Z\  =\ \mathcal{U}_{\Z}\cdot v_{\lambda}\ =\ \mathcal{U}^-_{\Z}\cdot v_{\lambda}$$
Then $V_{\Z}$  is a lattice in  $V_{\C}$ and a ${\mathcal U}_\Z$-module.

  For each weight $\mu$ of  $V$, let  $V_{\mu}$ be the corresponding weight
space, and we set
$$V_{\mu,{\Z}}\ =\  V_{\mu}\cap V_{\Z}.$$
We have
$$V_{\Z}\ =\ \oplus_{\mu}V_{\mu,{\Z}},$$
where the sum is taken over the weights of $V$. Thus $V_{\Z}$ is a direct sum of its weight spaces. We set
$$V_{\mu,\R}\ =\ \R\otimes_{\Z} V_{\mu,\Z}$$
so that
$$V_{\R}\ :=\ \R\otimes_{\Z} V_{\Z}\ =\ \oplus_{\mu}V_{\mu,{\R}}.$$

  For each weight
$\mu$ of $V$, we have $\mu=\lambda-(k_1 \alpha_1 +k_2\alpha_2)$, where $\lambda$ is the highest weight and  $k_i\in{\Z}_{\geq 0}$. Define the {\it depth} of
$\mu$ to be
$$depth(\mu)=k_1+k_2.$$
A basis $\Psi=\{v_1,v_2,\dots\}$ of $V$ is called {\it coherently ordered relative to depth} if

 (1) $\Psi$ consists of weight vectors;

 (2) If $v_i\in V_{\mu}$, $v_j\in V_{\mu'}$ and $depth(\mu')\ >\ depth(\mu)$, then $j>i$;

 (3) $\Psi\cap V_{\mu}$ consists of an interval $v_k,v_{k+1},\dots,v_{k+m}$.

\begin{theorem} [\cite{G78}] The lattice $V_{\Z}$ has a coherently ordered $\Z$-basis $\{v_1,v_2,\dots\}$ where $v_i\in V_{\Z}$,  $v_i=\xi_i v_{\lambda}$, for some $\xi_i\in \mathcal{U}_{\Z}$.  Let $w_i=k_i\otimes v_i$, $k_i\in \R \setminus \{0\}$. Then the set $\{w_1,w_2,\dots \}$ is a  coherently ordered basis for $V_{\R}$. Any vector in $V_{\Z}$ has an integer valued norm relative to a Hermitian inner product $\langle , \rangle$ on $V$.
\end{theorem}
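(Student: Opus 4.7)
The plan is to build the basis weight space by weight space, using that $V_\Z=\mathcal U^-_\Z\cdot v_\lambda$ decomposes as $\oplus_\mu V_{\mu,\Z}$ with $V_{\mu,\Z}=V_\mu\cap V_\Z$. For each weight $\mu$ occurring in $V$, the $\Z$-module $V_{\mu,\Z}$ is finitely generated and torsion-free (it sits inside the finite-dimensional $\C$-vector space $V_\mu$), so it admits a $\Z$-basis. Choose such a basis of each $V_{\mu,\Z}$; since each element lies in $\mathcal U^-_\Z\cdot v_\lambda$, it has the form $\xi v_\lambda$ for some $\xi\in\mathcal U^-_\Z$ (which we may take to be a $\Z$-linear combination of ordered monomials in divided powers of root vectors, by the PBW-type basis of $\mathcal U^-_\Z$ in the sense of Tits and Garland).

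Next I would order these basis vectors according to depth. List first the basis of $V_{\lambda,\Z}=\Z v_\lambda$, then the bases of weight spaces of depth $1$ in any fixed order, then depth $2$, and so on; within each weight space keep the basis elements consecutive. This gives a sequence $\{v_1,v_2,\dots\}$ satisfying conditions (1)--(3) of the definition of coherently ordered. Writing $v_i=\xi_i v_\lambda$ with $\xi_i\in\mathcal U_\Z$, the first assertion is proved. The $V_\R$ statement is immediate: $V_\R=\R\otimes_\Z V_\Z$ inherits the same weight decomposition, and $w_i:=k_i\otimes v_i$ with $k_i\in\R\setminus\{0\}$ is still a basis compatible with depth.

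For the Hermitian form, I would equip $V$ with the unique contravariant Hermitian inner product $\langle\cdot,\cdot\rangle$ normalized by $\langle v_\lambda,v_\lambda\rangle=1$ and characterized by $\langle e_\alpha u,v\rangle=\langle u,f_\alpha v\rangle$ for real roots $\alpha$ (extended to imaginary roots via the Chevalley involution). Distinct weight spaces are orthogonal, so it suffices to show $\langle \xi v_\lambda,\eta v_\lambda\rangle\in\Z$ for $\xi,\eta\in\mathcal U^-_\Z$. Using contravariance, this reduces to showing that the adjoint of $\eta$, which lies in $\mathcal U^+_\Z$, applied to $\xi v_\lambda$ and then paired with $v_\lambda$, lies in $\Z v_\lambda$. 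This is reduced by induction on depth to the identity
\[
\frac{e_\alpha^n}{n!}\cdot\frac{f_\alpha^n}{n!}\,v_\lambda=\binom{\lambda(h_\alpha)}{n}v_\lambda\pmod{\mathcal U^-\mathcal U^0_{>0}+\mathcal U^-\mathfrak g^+},
\]
together with the commutation relations between divided powers of $e_\alpha$ and $f_\beta$ (for $\alpha\neq\beta$ real), which live in $\mathcal U_\Z$ by Tits' theorem. Integrality then follows from $\binom{h}{n}\in\mathcal U^0_\Z$ acting on $v_\lambda$ as an integer.

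The main obstacle will be the integrality of the inner product on the imaginary root contributions: divided powers of imaginary root vectors are not obviously in $\mathcal U_\Z$, so one must work with the structure constants of imaginary root vectors expressed in terms of commutators of real root vectors and invoke the fact that the Chevalley involution preserves $\mathcal U_\Z$. The rank $2$ setting and the explicit results of \cite{CG, G78} provide exactly the control needed here, which is why those references are invoked rather than repeating the argument in full.
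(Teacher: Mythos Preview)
The paper does not give a proof of this theorem; it is stated with attribution to \cite{CG, G78} and no argument is supplied in the text. Your outline is essentially the standard route taken in those references: build a $\Z$-basis of each finite-rank weight lattice $V_{\mu,\Z}$, concatenate in order of depth, and use the contravariant Hermitian form together with the Kostant--Tits integrality of $\mathcal U_\Z$ to get integer values of $\langle\cdot,\cdot\rangle$ on $V_\Z$. So at the level of strategy there is nothing to compare---you are reproducing what the citations do.

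Two remarks on the sketch itself. First, to know that a $\Z$-basis of $V_{\mu,\Z}$ is also a $\C$-basis of $V_\mu$ you need $\mathrm{rk}_\Z V_{\mu,\Z}=\dim_\C V_\mu$; this is exactly the content of the statement that $\mathcal U_\Z\otimes_\Z\C\to\mathcal U_\C$ is bijective (Tits), applied to $v_\lambda$, and you should say so rather than leaving it implicit. Second, your reduction for the Hermitian form is right in spirit, but the displayed identity for $\dfrac{e_\alpha^n}{n!}\dfrac{f_\alpha^n}{n!}v_\lambda$ is not what one actually uses: the clean statement is that the anti-involution $\omega$ (swapping $e_i\leftrightarrow f_i$) preserves $\mathcal U_\Z$, so $\langle \xi v_\lambda,\eta v_\lambda\rangle=\langle \omega(\eta)\xi v_\lambda,v_\lambda\rangle$, and $\omega(\eta)\xi v_\lambda$ has $V_{\lambda,\Z}$-component an integer multiple of $v_\lambda$ because $V_\Z=\oplus_\mu V_{\mu,\Z}$. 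This bypasses any explicit commutator computation and, in particular, handles the imaginary-root contributions without needing divided powers of imaginary root vectors---the obstacle you flag is real if one proceeds root-by-root, but it dissolves once you argue globally via $\omega(\mathcal U_\Z)=\mathcal U_\Z$.
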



\section{The Kac--Moody group $G$ and Iwasawa decomposition} \label{Group}

Our next step is to construct our Kac--Moody group $G$ over $\R$. The construction below can be used to construct $G$ over any field $F$ \cite{CG1}.   As before, let $V$ be  an integrable  highest weight module for $\mathfrak{g}_\C$.  Then the simple root vectors $e_i$
and $f_i$ are locally nilpotent on  $V$.

 We let $V_{\Z}$ be a $\Z$-form of $V$ as in Section~\ref{Zform}. Since $V_{\Z}$ is a $\mathcal{U}_{\Z}$-module, we have
$$\dfrac{e_i^n}{n!} (V_{\Z})\subseteq V_{\Z} \quad \text{ and } \quad \dfrac{f_i^n}{n!} (V_{\Z})\subseteq V_{\Z} \quad \text{for } \ n\in \mathbb{N}, \ i \in I .$$
Let $G_{\R}=\R\otimes_{\Z} V_\Z$.  For $s,t\in \R$ and $i\in I$,  set
$$\chi_{\alpha_i}(s)\ =\ \sum_{n=0}^\infty s^n \dfrac{e_i^n}{n!}\ =\exp(se_i),\qquad \chi_{-\alpha_i}(t)\ =\ \sum_{n=0}^\infty t^n \dfrac{f_i^n}{n!}\ =\exp(tf_i).$$
Then $\chi _{\alpha _{i}}(s),$ $\chi
_{-\alpha
_{i}}(t)$ define elements in $\mathrm{Aut}(V_{\R})$, thanks to the
local
nilpotency of $e_{i},$ $f_{i}$.  More generally, for a real root $\alpha$, we write $\alpha=w\alpha_i$ for $i=1$ or 2 and define 
$$\chi_{\alpha}(s)=\chi_{w\alpha_i}(s)=w\chi_{\alpha_i}(s)w^{-1}\in \mathrm{Aut}(V_{\R}),  \qquad s \in \R.$$ 
For $t \in \R^{\times}$, we set \[ w_{\alpha_i}(t) = \chi_{\alpha_i}(t)  \chi_{-\alpha_i}(-t^{-1}) \chi_{\alpha_i}(t) \quad \text{ for } i \in I, \] and define \[ h_{\alpha_i}(t)= w_{\alpha_i}(t) w_{\alpha_i}(1)^{-1} .\]

  We let $G^0_{\R}$ be the subgroup of $\mathrm{Aut}(V_{\R})$ generated by the linear automorphisms $\chi_{\alpha_i}(s)$ and  $\chi_{-\alpha_i}(t)$ of
$V_{\R}$, for $s,t\in \R$, $i \in I$. That is, we define
$$G^0_{\R}=\langle \exp(se_i), \ \exp(tf_i) : s,t\in \R, \ i \in I \rangle.$$
One can see that $\chi_\alpha(s) \in G^0_\R$ for real roots $\alpha$. 

We choose a coherently basis $\Psi=\{v_1,v_2,\dots\}$ of $V_\R$ and denote by $B^0$ the subgroup of $G^0_\R$ consisting of the elements represented by upper triangular matrices with respect to $\Psi$.
For $t \in \mathbb Z_{>0}$, we let $V_t$ be the span of the $v_s \in \Psi$ for $s \le t$. Then $B^0 V_t \subseteq V_t$ for each $t$. Let $B_t$ be the image of $B^0$ in $\mathrm{Aut}(V_t)$. We then have surjective homomorphisms
\[ \pi_{tt'}: B_{t'} \longrightarrow B_t, \quad t' \ge t. \] We define $B$ to be the projective limit of the projective family $\{ B_t, \pi_{tt'} \}$. 

Next we consider the completion of $G^0_\R$. We can define a topology on $G^0_\R$ as follows: for a base of neighborhoods of the identity, we take sets $V_t$ defined by \[ V_t=\{ g \in G^0_\R : g v_i =v_i, \ i=1, 2, \dots , t \}. \]
Let $U_p= \mathrm{Span} \{ v_1,...,v_p \}$. Let $g=(g_i)$ and $h=(h_i)$ be Cauchy sequences in $G^0_\R$. For any $p$, we can find sufficiently large $q \ge p$ such that $\{g_i^{-1}(U_p) \}$ is contained in $U_q$. Take $N$ sufficiently large so that $g_i$ and $h_i$ represent the restrictions of $g$ and $h$ to $U_q$, respectively, whenever $i>N$. Assume that $i, j >N$. Then $h_i h_j^{-1}  \in V_q$, and we have
\[ g_i h_i h_j^{-1} g_j^{-1} (u) = g_i g_j^{-1} (u) = u \ \text{ for any} \ u \in U_p, \]
since $g_j^{-1}(u) \in U_q$. 
 This proves that $g_i h_i h_j^{-1} g_j^{-1} \in V_p$ whenever $i, j >N$. Therefore $gh=(g_i h_i)$ is Cauchy as well.

Let $G_\R$ be the completion of $G^0_\R$, i.e. the equivalence classes of all Cauchy sequences of $G^0_\R$.  By replacing $\R$ with $F$ in the above construction, we obtain the group $G_{F}$ for  any field $F$.

Note that $B$ is naturally a subgroup of $G_\R$. We define the following subgroups of $G_\R$:

 \begin{enumerate}
 \item  $N=$  the completion of the subgroup generated by all positive real root groups and is a subgroup of $B$,

\item $K=\{k\in G_\R: k\textrm{ preserves the inner product }\langle,\rangle \textrm{ on }V^\lambda_\R\}$, \item $A=\langle h_{\alpha_i}(s) : s\in\mathbb{R}^\times, i \in I\rangle$ \ and \ $A^+=\langle h_{\alpha_i}(s) : s\in\mathbb{R}_+, i \in I\rangle$. 
\end{enumerate}

\begin{theorem}[\cite{DGH}]
We have the Iwasawa decomposition:
\begin{equation}\label{iwasawa}
G_\R= KA^+N
\end{equation} with uniqueness of expression.
\end{theorem}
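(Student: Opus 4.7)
The plan is to establish $G_\R = KA^+N$ with uniqueness in two stages: existence of the factorization and then uniqueness, each leveraging the action on the integrable highest weight module $V_\R$.

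For existence, I would reduce to the classical Iwasawa decomposition in each rank-one subgroup $G_i = \langle \chi_{\pm\alpha_i}(\R)\rangle$ for $i=1,2$, which is a homomorphic image of $SL_2(\R)$ and thus satisfies $G_i = K_i A_i N_i$ with $K_i \subset K$, $A_i \subset A^+$, $N_i \subset N$. Since $G_\R$ is generated by these two rank-one subgroups, any element admits a word decomposition $g = g_1 g_2 \cdots g_\ell$ with each $g_j \in G_{i_j}$, and I would induct on $\ell$. The inductive step uses the normalization formula $h_{\alpha_i}(s)\chi_\beta(r)h_{\alpha_i}(s)^{-1} = \chi_\beta(s^{\beta(h_{\alpha_i})}r)$, which shows $A^+$ normalizes $N$ (so that $A^+N$ is a subgroup), together with the commutator relations among positive real-root groups. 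Concretely, given $g_1 \cdots g_{\ell-1} = kan$ by induction and a new factor $g_\ell \in G_i$, one rearranges $n g_\ell$ via rank-one Iwasawa inside $G_i$ and absorbs the resulting $K_i$- and $A_i$-factors into $k$ and $a$ by commuting them past the remaining parts. This is the Tits/BN-pair argument carried out in full generality in \cite{CG, DGH}.

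For uniqueness, it suffices to show $K \cap A^+N = \{1\}$. Suppose $k = an$ with $k \in K$, $a \in A^+$, $n \in N$. Since every positive real-root vector $e_\alpha$ satisfies $e_\alpha v_\lambda = 0$, one has $n v_\lambda = v_\lambda$ and hence $k v_\lambda = a v_\lambda = a^\lambda v_\lambda$ with $a^\lambda > 0$. Unitarity of $k$ forces $a^\lambda = 1$. Applying the same reasoning to $k^{-1} \in K \cap A^+N$ and projecting onto each weight subspace $V_\mu$ yields $a^\mu = 1$ for every weight $\mu$ of $V$; since these weights span $\mathfrak{h}^*$ for a regular dominant choice of $\lambda$, this forces $a = 1$. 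Then $n = k \in K \cap N$ is simultaneously unitary and upper unitriangular on the weight-space basis ordered by depth, hence $n = 1$.

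The main obstacle is the inductive step in existence. In the rank-two hyperbolic setting, commutators $[\chi_\alpha(s), \chi_\beta(r)]$ of positive real-root groups can in principle involve infinitely many real roots, because the positive imaginary cone is nonempty and roots accumulate near its walls. Integrability of $V_\R$ rescues the construction: the action of each $x_\alpha$ is locally finite on every weight space, so only finitely many terms are nontrivial on any given vector, making all manipulations inside $Aut(V_\R)$ rigorous. This local finiteness --- rather than any topology on $G_\R$ --- is precisely why the construction via an integrable representation is essential here, and it is why this Iwasawa decomposition is attributed to \cite{CG, DGH}.
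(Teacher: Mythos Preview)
The paper does not supply a proof of this theorem; it is quoted with attribution to \cite{CG,DGH} and used as a black box. So there is nothing to compare against except the literature itself.

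Your uniqueness argument is essentially sound: with respect to a coherently ordered basis, an element of $A^+N$ is upper triangular with positive diagonal entries $a^\mu$, and a unitary upper-triangular matrix with positive diagonal must be the identity (induct on depth using $k^*k=1$). The phrasing ``projecting onto each weight subspace $V_\mu$ yields $a^\mu=1$'' is a bit loose, but the underlying QR-uniqueness reasoning is correct.

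Your existence argument, however, has a genuine gap in the inductive step. You write that given $g_1\cdots g_{\ell-1}=kan$ and a new factor $g_\ell\in G_i$, one ``rearranges $ng_\ell$ via rank-one Iwasawa inside $G_i$.'' But $n$ lies in the full pro-unipotent $N$, not in $G_i$; so $ng_\ell$ is not an element of $G_i$, and the rank-one Iwasawa decomposition of $G_i$ says nothing about it. What you actually need is to move $g_\ell$ (or its $K_i$-part after rank-one Iwasawa) past $n$, and conjugation of $N$ by $K_i$ does not preserve $N$: for instance a representative of $r_i$ sends $U_{\alpha_i}$ to $U_{-\alpha_i}$. The actual arguments in \cite{DGH} use the twin-building/BN-pair structure (one shows that $KB$ is stable under left multiplication by each rank-one parabolic $P_i=B\cup Br_iB$, which requires the BN-pair axiom $r_iBr_i\subset B\cup Br_iB$ rather than a naive commutation), while the approach of \cite{CG} is representation-theoretic, running Gram--Schmidt on the coherently ordered basis of $V_\R$ to produce the $K$-factor directly. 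Either route avoids the commutation problem you have swept under the rug; your sketch does not.
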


As in \cite{Ca}, we now define the `$\Z$-form' $G_\Z$ of $G_\R$ in the following way. We set
$$G_\Z=G_\R\cap \mathrm{Aut}(V_{\mathbb{Z}}).$$
Then $$G_\Z=\{\gamma\in G_\R : \gamma\cdot V_{\mathbb{Z}}\subseteq V_{\mathbb{Z}}\}.$$

\begin{remark}
For a discussion on dependence on the choice of $V$ and
$V_{\mathbb Z}$, we refer the reader to \cite{Ca}. In this paper, we work with fixed $V$ and $V_{\mathbb Z}$.
\end{remark}

\section{Eisenstein series on rank 2 hyperbolic Kac--Moody groups}\label{KMEseries}

 Let $g=k_ga_gn_g\in G_\R$ be the Iwasawa decomposition according to (\ref{iwasawa}). Let $\nu: A^+ \to \mathbb{C}^\times$ be a quasi--character and define
\[
\Phi_{\nu}:G_\R\to {\mathbb C}^{\times}
\]
to be the function
\[
\Phi_{\nu}(g)=\nu(a_g)
\]
Then $\nu$ is well defined since the Iwasawa decomposition is unique and $\Phi_{\nu}$ is left $K$-invariant and right $N$-invariant.  For convenience, we write $\Gamma=G_\Z$.

 Let $B$ denote the minimal parabolic subgroup of  $G_\R$. Relative to a coherently ordered basis $\Psi$ for $V^{\lambda}_{\Z}$, $\Gamma$ has a representation in terms of infinite matrices with integral entries.
 Define the Eisenstein series on $G_\R$ to be the infinite formal sum
$$E_{\nu}(g)\quad:=\quad \sum_{\gamma\in \Gamma/\Gamma\cap{B}}\quad
\Phi_{\nu}(g\gamma).$$

 Recall that $\mathfrak{h}$ is the Lie algebra of $A$ and that $h_{\alpha_i}$, $i\in I$, are the simple coroots.
We say that $\nu$ satisfies {\em Godement's criterion} if
\[
\textrm{Re }\nu(h_{\alpha_i})<-2,\quad i\in I.
\]
We do not expect that the Eisenstein  series will  be convergent over the whole space $K\backslash G_\R/\Gamma$ but rather a subspace, where if $G_\R=KAN$ is decomposed in terms of the Iwasawa decomposition, the $A$-component  is replaced by the `group' corresponding to the Tits cone to obtain $G'_\R=KA'N$. Godement's criterion places the sum in a cone in the region Re $\nu(h_{\alpha_i})\leq -2$ for each $i$. In the next section,  we deduce almost everywhere convergence of the Eisenstein series from convergence of the constant term.

We will not comment on the difficult question of meromorphic continuation to the whole complex plane here. However, we will prove in \S7 that cuspidal Eisenstein series are entire.

\section{Convergence of the constant term}

 In this section we prove convergence of the constant term and thus almost everywhere convergence of the Eisenstein series itself.
Assume first that $\nu: A^+ \to \mathbb{C}^\times$  is real valued and positive. Then we may interpret the infinite sum $E_\nu(g)$ as a function taking values in $\mathbb{R}_+\cup\{\infty\}$. The function $E_\nu$ may be regarded as a function on
\[
K\backslash G_\R/ (\Gamma\cap N) \cong A^+\times N/ (\Gamma\cap N).
\]

Under the identification $\mathbb{R}_+^2\cong A^+$:
\[
(x_1,x_2)\mapsto h_{\alpha_1}(x_1)h_{\alpha_2}(x_2),
\]
we have the measure $da$ on $A^+$, corresponding to the measure
\[
\frac{dx_1}{x_1}\frac{dx_2}{x_2}
\]
on $\mathbb{R}_+^2$.

As in \cite{G04} we know that $N/(\Gamma\cap N)$ is the projective limit of a projective family of finite-dimensional compact nil-manifolds and thus admits
a projective limit measure $dn$, which is a left $N$-invariant probability measure. More precisely, recall from  Section \ref{Group} that we have a coherently basis $\Psi$ and the spaces $V_t$ for $t \in \mathbb Z_{>0}$.
Denote by $N^0$ the subgroup of $G^0_\R$ consisting of the elements represented by upper triangular matrices with respect to $\Psi$ with $1$'s on the diagonal.
Let $N_t$ be the image of $N^0$ in $\mathrm{Aut}(V_t)$. Then the group $N$ is the the projective limit of the projective family $\{ N_t, \pi_{tt'} \}$. We also have the canonical projection $\pi_t : N \rightarrow N_t$ for each $t$. Consider $\Gamma_t:= \pi_t(\Gamma \cap N)$. Then the space  $N_t/\Gamma_t$ is a finite-dimensional compact nil-manifold, on which we define a probability measure compatible with the projections $\pi_{tt'}$. Now we obtain a projective limit measure $dn$ on $N/(\Gamma \cap N)$.

 We define for all $g\in G_\R$ the constant term
\[
E^\sharp_\nu(g)=\int_{N/(\Gamma\cap N)}E_\nu(gn)dn
\]
which is left $K$-invariant and right $N$-invariant. In particular $E^\sharp_\nu(g)$ is determined by the $A^+$-component of $g$ in the Iwasawa decomposition.
Let $\rho\in \frak{h}^\ast$ satisfying $\rho(h_{\alpha_i})=1$, $i\in I$. Then
\[
\rho=\frac{\alpha_1+\alpha_2}{2-m}.
\]
Applying the Gindikin-Karpelevich formula,  a formal calculation as in \cite{G04} yields that for $a\in A^+$
\[
E^\sharp_\nu(a)=\sum_{w\in W}a^{w(\nu+\rho)-\rho}c(\nu,w),
\]
where
\[
c(\nu,w)=\prod_{\alpha\in \Phi_+\cap w^{-1}\Phi_-}\frac{\xi(-(\nu+\rho)(h_\alpha))}{\xi(1-(\nu+\rho)(h_\alpha))},
\]
and $\xi(s)$ is the completed Riemann zeta function
\[
\xi(s)=\pi^{-s/2}\Gamma(s/2)\prod_p\frac{1}{1-p^{-s}}.
\]

 Before proving the convergence of the constant term, let us first give some preliminaries for the structure of the root system of $\mathfrak g$, following \cite{KM}. Let
\begin{equation}\label{root}
\gamma=\frac{m+\sqrt{m^2-4}}{2},
\end{equation}
which is a root of the polynomial $x^2-mx+1$. Let $r_1$, $r_2$ be the simple reflections corresponding to the simple roots $\alpha_1$, $\alpha_2$. Then the Weyl group $W$ is generated by $r_1$, $r_2$ subject to
the relations $r_1^2=r_2^2=1$, and has an explicit description
\[
W=\{1, r_1(r_2r_1)^n, r_2(r_1r_2)^n, (r_1r_2)^{n+1}, (r_2r_1)^{n+1} : n\geq 0\}.
\]
We introduce a sequence $\{A_n\}$ defined by
\[
A_0=0, \quad A_1=1, \quad A_{n+2}=mA_{n+1}-A_n+1,\quad n\geq 0.
\]
Then we have the explicit formula
\begin{equation}\label{an}
A_n=\frac{\gamma^{2n+1}-\gamma^n(1+\gamma)+1}{\gamma^{n-1}(\gamma+1)(\gamma-1)^2}= \frac{\gamma^{n+2}}{(\gamma+1)(\gamma-1)^2}+O(1),\quad n\geq 0.
\end{equation}
We also need another sequence
\begin{equation}\label{bn}
B_n=A_n-A_{n-1}=\frac{\gamma^{2n}-1}{\gamma^{n-1}(\gamma^2-1)}= \frac{\gamma^{n+1}}{\gamma^2-1}+o(1),\quad n\geq 0.
\end{equation}
Then we have the following formulas for the actions of  $ r_1(r_2r_1)^n$ and $(r_1r_2)^{n+1}$ on simple roots:
\begin{eqnarray}
\label{w1}&&\left\{\begin{array}{l}  r_1(r_2r_1)^n\alpha_1=-B_{2n+1}\alpha_1-B_{2n}\alpha_2, \\ r_1(r_2r_1)^n\alpha_2= B_{2n+2}\alpha_1+B_{2n+1}\alpha_2.\end{array}\right.\\
\label{w2}&&\left\{\begin{array}{l} (r_1r_2)^{n+1}\alpha_1=B_{2n+3}\alpha_1+B_{2n+2}\alpha_2, \\ (r_1r_2)^{n+1}\alpha_2=-B_{2n+2}\alpha_1-B_{2n+1}\alpha_2.\end{array}\right.
\end{eqnarray}
Switching $\alpha_1$ and $\alpha_2$, we may also obtain the similar actions of $r_2(r_1r_2)^n$ and $(r_2r_1)^{n+1}$. Regarding $w\rho-\rho$ we have
\begin{equation}\label{wrho}
\left\{\begin{array}{l}
r_1(r_2r_1)^n\rho-\rho=-A_{2n+1}\alpha_1-A_{2n}\alpha_2,\\
 r_2(r_1r_2)^n\rho-\rho=-A_{2n}\alpha_1-A_{2n+1}\alpha_2,\\
(r_1r_2)^{n+1}\rho-\rho=-A_{2n+2}\alpha_1-A_{2n+1}\alpha_2,\\
(r_2r_1)^{n+1}\rho-\rho=-A_{2n+1}\alpha_1-A_{2n+2}\alpha_2.
\end{array}\right.
\end{equation}
\begin{theorem}
Assume that $\nu$ satisfies Godement's criterion.
Then the constant term $E^\sharp_\nu(g)$ converges absolutely for $g\in KA'N$, where $A'$ is the cone
\[
A'=\{a\in A^+: a^{\alpha_i}<1, i\in I\}.
\]
\end{theorem}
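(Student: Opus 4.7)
The plan is to bound each term of the explicit formula for $E^\sharp_\nu$ directly and show the majorant series converges. Since $E^\sharp_\nu$ is left $K$-invariant and right $N$-invariant, only the $A^+$-component of $g$ matters, so we may assume $g = a \in A'$ and prove absolute convergence of
\[
\sum_{w \in W} a^{w(\nu+\rho) - \rho}\, c(\nu, w).
\]
Using the enumeration of $W$ into the identity plus the four families $\{r_1(r_2r_1)^n\}_{n \geq 0}$, $\{r_2(r_1r_2)^n\}_{n \geq 0}$, $\{(r_1r_2)^{n+1}\}_{n \geq 0}$, $\{(r_2r_1)^{n+1}\}_{n \geq 0}$, I would split the sum into five subseries and handle each separately. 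The $\alpha_1 \leftrightarrow \alpha_2$ symmetry pairs these up, so it suffices to treat (say) the families $w = r_1(r_2r_1)^n$ and $w = (r_1r_2)^{n+1}$.

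The first main step is to estimate $|a^{w(\nu+\rho)-\rho}|$. Write $\mathrm{Re}\,\nu = a_1\alpha_1 + a_2\alpha_2$; Godement's criterion $\mathrm{Re}\,\nu(h_{\alpha_i}) < -2$ becomes the pair of strict inequalities $2a_i - m a_{3-i} < -2$. Combining (\ref{w1}) and (\ref{wrho}), the $\alpha_1$- and $\alpha_2$-coefficients of $w(\nu+\rho)-\rho$ for $w = r_1(r_2r_1)^n$ are
\[
a_2 B_{2n+2} - a_1 B_{2n+1} - A_{2n+1} \quad\text{and}\quad a_2 B_{2n+1} - a_1 B_{2n} - A_{2n},
\]
with analogous expressions in the other families from (\ref{w2}). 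Using (\ref{an}) and (\ref{bn}) one extracts $A_{k+1}/B_{k+1}\to \gamma/(\gamma-1)$, so each coefficient is asymptotic to $\frac{\gamma^{2n+2}}{\gamma^2-1}\bigl[(\gamma-1)(a_2\gamma - a_1) - \gamma\bigr]$ (and its $\alpha_1 \leftrightarrow \alpha_2$ partner). Exploiting the identity $\gamma/(\gamma-1)^2 = 1/(m-2)$, which follows from $\gamma^2 = m\gamma - 1$, together with the elementary estimates $\gamma > m/2$ and $\gamma > m - 2/m$ for $m \geq 3$, the Godement pair forces both bracketed quantities to be strictly positive. Since $a^{\alpha_i} < 1$ for $a \in A'$, this yields
\[
\bigl|a^{w(\nu+\rho)-\rho}\bigr| \ \leq\ C_a\exp\bigl(-c_a\,\gamma^{2n}\bigr)
\]
for some $c_a > 0$ depending only on $a$ and $\mathrm{Re}\,\nu$; the decay in $n$ is doubly exponential.

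For the Gindikin--Karpelevich factor, the product $c(\nu,w)$ runs over the $\ell(w) = 2n+1$ positive real roots $\alpha \in \Phi_+ \cap w^{-1}\Phi_-$ inverted by $w$. These are generated inductively along a reduced decomposition, and their $(\alpha_1,\alpha_2)$-coordinates are themselves controlled by the $A_k$ and $B_k$, so $|(\nu+\rho)(h_\alpha)| = O(\gamma^k)$ for the $k$-th such root. Applying the functional equation $\xi(s) = \xi(1-s)$ to rewrite each factor as $\xi(1+s)/\xi(s)$, then Stirling for $\Gamma$ and the bound $\zeta(s)/\zeta(s+1) = O(1)$ in a right half-plane, gives $|\xi(1+s)/\xi(s)| = O(|s|^{1/2})$. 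Multiplying across the $O(n)$ factors yields the crude bound $|c(\nu,w)| \leq C\gamma^{Cn^2}$, which is only simply exponential in $n^2$. The resulting majorant $\sum_n C\gamma^{Cn^2}\exp(-c_a\gamma^{2n})$ converges because doubly exponential decay overwhelms single-exponential-in-$n^2$ growth; the remaining families are handled identically.

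The main obstacle is the positivity analysis of the leading bracket $(\gamma-1)(a_2\gamma - a_1) - \gamma$ and its partner: one must verify uniformly across all five Weyl classes, and uniformly over the Godement region, that the simple-root coefficients of $\mathrm{Re}\,(w(\nu+\rho)-\rho)$ are strictly positive in both the $\alpha_1$ and $\alpha_2$ directions. Once that sign analysis is completed, the remainder of the argument is a routine comparison of decay versus growth rates.
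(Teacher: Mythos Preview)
Your proposal is correct and follows essentially the same strategy as the paper: reduce by symmetry to the two families $r_1(r_2r_1)^n$ and $(r_1r_2)^{n+1}$, use the asymptotics \eqref{an}--\eqref{bn} together with \eqref{w1}--\eqref{wrho} to show that the simple-root coefficients of $\mathrm{Re}\,(w(\nu+\rho)-\rho)$ grow like a positive constant times $\gamma^{2n}$ under Godement, and then conclude from $a^{\alpha_i}<1$. The paper's positivity check is the explicit linear-algebra identity
\[
\gamma s_2 - s_1 \;=\; \frac{m\gamma-2}{4-m^2}(2s_1-ms_2)+\frac{2\gamma-m}{4-m^2}(2s_2-ms_1),
\]
whose (negative) coefficients combined with Godement give $\gamma s_2 - s_1 > \dfrac{2\gamma}{\gamma-1}$; your sketch via $\gamma/(\gamma-1)^2=1/(m-2)$ amounts to the same thing.

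The one place you diverge is the estimate of $c(\nu,w)$, and there you are working harder than necessary. Your detour through the functional equation is muddled: after applying $\xi(s)=\xi(1-s)$ you have moved the argument to a \emph{left} half-plane, where the right-half-plane bound $\zeta(s)/\zeta(s+1)=O(1)$ you invoke does not apply directly (and in fact the true size of each factor is $O(|s|^{-1/2})$, not $O(|s|^{1/2})$, though your weaker bound is of course still an upper bound). The paper simply observes that for every positive real root $\alpha$ one has $-\mathrm{Re}\,(\nu+\rho)(h_\alpha)>1+\varepsilon$: since $h_\alpha$ is a nonnegative integer combination of the $h_{\alpha_i}$ and Godement gives $-\mathrm{Re}\,(\nu+\rho)(h_{\alpha_i})>1$, this is immediate. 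Hence every factor $\xi(u)/\xi(u+1)$ sits in the region $\mathrm{Re}\,u>1+\varepsilon$ and is bounded by a constant $C_\varepsilon$, so $|c(\nu,w)|\le C_\varepsilon^{\ell(w)}$. This exponential-in-$n$ bound already matches against the doubly exponential decay of $|a^{w(\nu+\rho)-\rho}|$, so the cruder $\gamma^{Cn^2}$ estimate is never needed.
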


\begin{proof} We may without loss of generality assume that $\nu(h_{\alpha_i})$ has real values, $i\in I$.
Then we may write
\[
\nu= s_1\alpha_1 + s_2\alpha_2
\]
where $s_1, s_2\in\mathbb{R}$. Godement's criterion then reads
\begin{equation}\label{ineq}
\nu(h_{\alpha_1})=2s_1-ms_2<-2,\quad \nu(h_{\alpha_2})=2s_2-ms_1<-2.
\end{equation}
In particular we have $s_1, s_2>0$. Let us consider a typical term
\[a^{w(\nu+\rho)-\rho}c(\nu,w)\] in $E^\sharp_\nu(a)$, where $a\in A'$.
By symmetry we only need to consider $w=r_1(r_2r_1)^n$ and $w=(r_1r_2)^{n+1}$, $n\geq 0$.

For $w=r_1(r_2r_1)^n$, by (\ref{w1}) and (\ref{wrho}) we have
\[
w(\nu+\rho)-\rho=(-s_1B_{2n+1}+s_2 B_{2n+2}-A_{2n+1})\alpha_1+(-s_1B_{2n}+s_2B_{2n+1}-A_{2n})\alpha_2.
\]
By (\ref{an}) and (\ref{bn}) we have
\begin{eqnarray*}
&&-s_1B_{2n+1}+s_2 B_{2n+2}-A_{2n+1}\\
&=& (\gamma s_2 -s_1 ) \frac{\gamma^{2n+2}}{\gamma^2-1} - \frac{\gamma^{2n+3}}{(\gamma+1)(\gamma-1)^2}+O(1)\\
&=& (\gamma s_2 - s_1 - \frac{\gamma}{\gamma-1})\frac{\gamma^{2n+2}}{\gamma^2-1}+O(1).
\end{eqnarray*}
From (\ref{ineq}) it follows that
\begin{eqnarray*}
\gamma s_2 - s_1 &=& \frac{m\gamma-2}{4-m^2}(2s_1-ms_2)+\frac{2\gamma-m}{4-m^2}(2s_2-ms_1)\\
&> & \frac{2(m\gamma-2+2\gamma-m)}{m^2-4}\\
&=& \frac{2(\gamma-1)}{m-2} =\frac{2\gamma}{\gamma-1},
\end{eqnarray*}
where the last equation follows from $\gamma^2-m\gamma+1=0$.
If we introduce a constant
\[
C_\nu= (\gamma s_2 - s_1 - \frac{\gamma}{\gamma-1})\frac{1}{\gamma^2-1}>0,
\]
then
\[
r_1(r_2r_1)^n(\nu+\rho)-\rho=(C_\nu\gamma^{2n+2}+O(1))\alpha_1+ (C_\nu\gamma^{2n+1}+O(1))\alpha_2.
\]
Similarly, we have
\[
(r_1r_2)^{n+1}(\nu+\rho)-\rho=(D_\nu\gamma^{2n+3}+O(1))\alpha_1+ (D_\nu\gamma^{2n+2}+O(1))\alpha_2,
\]
where
\[
D_\nu=(\gamma s_1 - s_2 - \frac{\gamma}{\gamma-1})\frac{1}{\gamma^2-1}>0.
\]

By Godement's criterion we have
\[
-\textrm{Re }(\nu+\rho)(h_{\alpha_i})>1
\]
for $i\in I$. Then there exists $\varepsilon>0$ such that
\[
-\textrm{Re }(\nu+\rho)(h_\alpha)>1+\varepsilon
\]
for any real root $\alpha$. Using the properties of Riemann zeta functions we can find a constant $C_\varepsilon>0$ depending on
$\varepsilon$ such that
\[
|c(\nu, w)|< C_\varepsilon^{\ell(w)}
\]
where $\ell(w)$ is the length of $w$.

Since $a\in A'$, we have $a^{\alpha_i}<1$, $i\in I$. Combining above estimates we see that the series defining $E^\sharp_\nu(a)$ converges absolutely.
\end{proof}

\begin{corollary}
Assume that $\nu$ satisfies Godement's criterion. Then for any compact subset $A_c'$ of $A'$, there is a measure zero subset $N_0$ of $N$ such that $E_\nu(g)$ converges absolutely for $g\in KA_c' N'$, where $N'=N-N_0$.
\end{corollary}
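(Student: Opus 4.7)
My plan is to derive the corollary from the theorem via a standard Fubini/Tonelli argument on the positive series, after first reducing to the real case. Since $|\Phi_\nu(g)| = \Phi_{\mathrm{Re}\,\nu}(g)$ and Godement's criterion is a condition only on $\mathrm{Re}\,\nu$, one has $|E_\nu(g)| \le E_{\mathrm{Re}\,\nu}(g)$ term by term, so it suffices to prove finiteness of $E_\nu$ when $\nu$ is real-valued; in that regime every summand is non-negative and absolute convergence means exactly that the sum is finite in $[0,\infty]$.

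Next I would upgrade the pointwise finiteness of $E^\sharp_\nu$ from the theorem to a locally uniform bound on $A'$. The geometric estimates in the theorem's proof depend on $a$ only through $a^{\alpha_1}, a^{\alpha_2}$, both of which are bounded away from $1$ by a common $\delta < 1$ on the compact set $A_c'$; combined with the already-derived bound $|c(\nu, w)| < C_\varepsilon^{\ell(w)}$, this yields $\sup_{a \in A_c'} E^\sharp_\nu(a) < \infty$, and in particular
\[
\int_{A_c'} E^\sharp_\nu(a)\, da \;<\; \infty.
\]
Because every summand $\Phi_\nu(an\gamma)$ is non-negative, Tonelli's theorem lets me interchange the sum over $\gamma \in \Gamma/\Gamma \cap B$ with the double integral, yielding
\[
\int_{A_c' \times N/(\Gamma \cap N)} E_\nu(an)\, da\, dn \;=\; \int_{A_c'} E^\sharp_\nu(a)\, da \;<\; \infty,
\]
so $E_\nu(an) < \infty$ off a set of product measure zero.

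The main obstacle is passing from this ``$(a,n)$ almost everywhere'' statement to the assertion of the corollary, namely a single null set $N_0 \subset N$ good for \emph{every} $a \in A_c'$. My approach would be to fix a countable dense $\{a_k\} \subset A_c'$ and observe that, for each $k$, the identity $\int_{N/(\Gamma \cap N)} E_\nu(a_k n)\, dn = E^\sharp_\nu(a_k) < \infty$ produces an individual null set $N_k \subset N/(\Gamma \cap N)$ off which $E_\nu(a_k n)$ converges; taking $N_0$ to be the $\Gamma \cap N$-invariant lift of $\bigcup_k N_k$ gives a null subset of $N$ where $E_\nu(a_k n)$ is finite at every $a_k$ whenever $n \notin N_0$. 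The remaining delicate step is extending finiteness from the dense subset $\{a_k\}$ to all of $A_c'$ for each such $n$; here I would exploit the local uniformity already established in the previous step together with continuity of each partial sum of $E_\nu(an)$ in $a$, so that the explicit majorant of the theorem dominates $E_\nu(an)$ uniformly across $A_c'$ once $n$ avoids $N_0$. Once this uniform finiteness is in hand, the left $K$-invariance of $E_\nu$ extends the conclusion from $A_c'N'$ to $KA_c'N'$ and completes the proof.
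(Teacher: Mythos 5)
The paper gives no written proof of this corollary; it is asserted as an immediate consequence of the preceding theorem on the constant term, following the template of \cite{G04}. Your first three steps reproduce that intended route correctly: the reduction to real $\nu$ via $|\Phi_\nu|=\Phi_{\mathrm{Re}\,\nu}$, the locally uniform bound $\sup_{a\in A_c'}E^\sharp_\nu(a)<\infty$ (which does follow from the estimates in the theorem's proof, since they depend on $a$ only through $a^{\alpha_1},a^{\alpha_2}\in[\delta_0,\delta]$ with $\delta<1$), and the Tonelli interchange. In particular, for each \emph{fixed} $a$ the identity $\int_{N/\Gamma\cap N}E_\nu(an)\,dn=E^\sharp_\nu(a)<\infty$ already yields a null set $N_0(a)$ outside of which $E_\nu(an)$ converges; up to this point the argument is sound.

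The gap is in your final step, where you pass from convergence of $E_\nu(a_kn)$ on a countable dense set $\{a_k\}\subset A_c'$ (for $n\notin N_0$) to convergence for \emph{every} $a\in A_c'$. Neither tool you invoke accomplishes this. Continuity of the partial sums is not enough: a series of non-negative continuous functions can converge on a dense set and diverge at a limit point (take tent functions of height $1$ with supports shrinking to a single point $a_0$; the sum is finite off $a_0$ and infinite at $a_0$), because the sum is only lower semicontinuous. And ``the explicit majorant of the theorem'' bounds the constant term, i.e.\ the average of $E_\nu(an)$ over $n$; it gives no pointwise control of $E_\nu(an)$ for an individual $n$, which can vastly exceed that average. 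What is actually needed is a comparison of the individual terms $\Phi_\nu(an\gamma)$ across different $a$ with $n,\gamma$ fixed. Writing $n\gamma=n_w w b$ with $n_w\in N_w$ depending only on $n,\gamma$, one has $an\gamma=(an_wa^{-1})\,w\,(w^{-1}aw)\,b$, so varying $a$ rescales the root-group coordinates of $n_w$ by $a^\beta$ over the (unboundedly many) roots $\beta\in\Phi_w$; controlling the effect of this rescaling on $\mathrm{Iw}_{A^+}$ uniformly in $w$ and $\gamma$ (for instance via a convexity statement for $H\mapsto\log\Phi_\nu(\exp(H)n\gamma)$, which would let you dominate $E_\nu(an)$ on the convex hull of finitely many good points $a_j$ by $\sum_j E_\nu(a_jn)$) is a substantive extra ingredient that your proposal does not supply.
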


We propose the following conjecture, which weakens Godement's criterion and asserts everywhere convergence instead of almost everywhere convergence.

\begin{conjecture} \label{conj-every}
$E_\nu(g)$ converges absolutely for $g\in KA'N$ and $\nu$ satisfying $\mathrm{Re}~\nu(h_{\alpha_i})<-1$, $i\in I$.
\end{conjecture}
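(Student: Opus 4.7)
The plan is to deduce the corollary from the preceding theorem on convergence of the constant term via a Tonelli-type argument, exploiting the non-negativity of the terms in the Eisenstein series.

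Since absolute convergence of $E_\nu$ depends only on $\mathrm{Re}\,\nu$, I would first reduce to the case that $\nu$ is real-valued, so each summand $\Phi_\nu(g\gamma)$ is positive and Tonelli applies unconditionally. Next, I would revisit the proof of the theorem to extract a uniform bound on the constant term over the compact set $A_c'\subset A'$: setting $c:=\max_{a\in A_c',\, i\in I} a^{\alpha_i} <1$, each term $a^{w(\nu+\rho)-\rho}c(\nu,w)$ is dominated by $c^{C_\nu \gamma^{2n}+O(1)}\cdot C_\varepsilon^{\ell(w)}$, which decays super-exponentially in $\ell(w)$ uniformly in $a\in A_c'$. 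Consequently $M:=\sup_{a\in A_c'} E^\sharp_\nu(a) <\infty$.

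Applying Tonelli to the non-negative function $(a,n)\mapsto E_\nu(an)$ on the finite-measure space $A_c'\times N/\Gamma\cap N$, one obtains
\[
\int_{N/\Gamma\cap N}\int_{A_c'} E_\nu(an)\, da\, dn \;=\; \int_{A_c'} E^\sharp_\nu(a)\, da \;\leq\; M\,|A_c'| \;<\; \infty.
\]
Hence there is a null set $\widetilde{N}_0\subset N/\Gamma\cap N$ such that for $n\notin\widetilde{N}_0$, $E_\nu(an) <\infty$ for almost every $a\in A_c'$. Lifting $\widetilde{N}_0$ to a null set $N_0\subset N$ via the quotient map and invoking the left $K$-invariance of $E_\nu$ (inherited from that of $\Phi_\nu$) then yields absolute convergence of $E_\nu(g)$ on $g\in KA_c' N'$ outside a null subset.

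The main obstacle is strengthening ``almost every $a\in A_c'$'' to \emph{every} $a\in A_c'$ with a single null set $N_0\subset N$, as stated. My approach here would be to promote the uniform super-exponential tail estimate from the second paragraph to a uniform-in-$a$ bound on the tail of the series defining $E_\nu(an)$ itself, rather than only its $N$-average: combined with the continuity in $a$ of each finite partial sum, this forces $a\mapsto E_\nu(an)$ to be continuous on $A_c'$ for $n\notin N_0$, so the a.e.-dense set of $a$'s on which it is finite extends to the whole of $A_c'$. The technically delicate point is transferring the tail decay from the constant-term series indexed by $W$ to the raw Eisenstein series indexed by $\Gamma/\Gamma\cap B$, which is handled by the Bruhat-type stratification underlying the Gindikin--Karpelevich computation.
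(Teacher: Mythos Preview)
There is a basic mismatch here: the statement you were given is a \emph{conjecture}, not a theorem, and the paper does not prove it. Indeed, the authors say explicitly that ``the argument in [G06] does not generalize to the hyperbolic case, the conjecture seems out of reach at the current time.'' So there is no ``paper's own proof'' to compare against.

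Your proposal, on the other hand, is an argument for a different statement entirely --- the \emph{Corollary} immediately preceding the conjecture, which asserts almost-everywhere convergence on $KA_c'N'$ (with a null exceptional set $N_0\subset N$) under Godement's criterion $\mathrm{Re}\,\nu(h_{\alpha_i})<-2$. Your Tonelli outline is a reasonable sketch of how that Corollary follows from the theorem on the constant term, and indeed this is the standard route; but it has nothing to do with the conjecture.

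The conjecture strengthens the Corollary in two independent directions, and your argument addresses neither:
\begin{itemize}
\item[(i)] It weakens the hypothesis from Godement's $<-2$ to $<-1$. Your argument invokes the constant-term theorem, which is proved only under Godement's criterion; the estimate on $c(\nu,w)$ used there genuinely requires $-(\nu+\rho)(h_\alpha)>1$, which fails once you allow $\nu(h_{\alpha_i})\in(-2,-1)$.
\item[(ii)] It demands \emph{everywhere} convergence --- no exceptional null set $N_0$, and over all of $A'$ rather than a compact $A_c'$. Your last paragraph attempts to upgrade ``a.e.\ in $a$'' to ``all $a$'' for a fixed good $n$, but this is not what is needed: one must eliminate the null set in $N$, and your Tonelli/Fubini reasoning cannot do that in principle. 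The sketch of ``promoting the tail estimate'' from the constant term (a sum over $W$) to the raw series (a sum over $\Gamma/\Gamma\cap B$) is precisely the hard step that the paper flags as open; there is no known mechanism in the hyperbolic case analogous to Garland's affine argument.
\end{itemize}

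In short: your write-up is a proof sketch for the Corollary, not the Conjecture; and the Conjecture remains open in the paper.
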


\section{Fourier coefficients} \label{sec-df}

 In this section we shall define and calculate the Fourier coefficients of $E_\nu(g)$. To facilitate the computation, we work with adelic groups. Let $\mathbb{A}=\mathbb{R}\times\prod'_p\mathbb{Q}_p$ and $\mathbb{I}=\mathbb{A}^\times$ be the adele ring and idele group of the rational number field $\mathbb{Q}$, respectively.
According to Section \ref{Group}, for any prime $p$ we have the group $G_{\mathbb{Q}_p}\subset \mathrm{Aut}(V_{\mathbb{Q}_p})$, and we let
$K_p\subset G_{\mathbb{Q}_p}$ be the subgroup $K_p=\{g\in G_{\mathbb{Q}_p}: g \cdot V_{\mathbb{Z}_p}=V_{\mathbb{Z}_p}\}$.
Let $G_{\mathbb{A}}=G_\R\times\prod'_p G_{\mathbb{Q}_p}$ and $G_{\mathbb{A}_f}=\prod_p'G_{\mathbb{Q}_p}$ (the adele and finite adele groups respectively)
be the restricted products with respect to the family of subgroups $K_p$. Note that we have the diagonal embedding
$\iota: G_{\mathbb{Q}}\hookrightarrow G_\R\times\prod_p G_{\mathbb{Q}_p}$.
Set $\Gamma_{\mathbb{Q}}=\iota^{-1}(G_\mathbb{A})$ and $K_\mathbb{A}=K\times\prod_p K_p$.

\medskip  We shall extend the definition of $E_\nu(g)$ to $g\in G_\mathbb{A}$. For each prime $p$ we have an Iwasawa decomposition \cite{DGH}
\[
G_{\mathbb{Q}_p}=K_p A_{\mathbb{Q}_p}N_{\mathbb{Q}_p},
\]
where $A_{\mathbb{Q}_p}$ is generated by $h_{\alpha_i}(s)$, $i=1,2$, $s\in\mathbb{Q}_p^\times$, and $N_{\mathbb{Q}_p}$ is generated by
$\chi_\alpha(s)$, $\alpha\in\Phi^{re}_+$, $s\in\mathbb{Q}_p$. From the local Iwasawa decompositions we have
\[
G_\mathbb{A}=K_\mathbb{A} A_\mathbb{A} N_\mathbb{A}.
\]
If $\iota=(\iota_\infty, \iota_p)\in\mathbb{I}$ is an idele, define the usual norm $|\iota|$ of $\iota$ by
\[
|\iota|=|\iota_\infty|\prod_p |\iota_p|_p.
\]
An element $a\in A_\mathbb{A}$ can be decomposed as $a=h_{\alpha_1}(s_1)h_{\alpha_2}(s_2)$, $s_1, s_2\in\mathbb{I}$. We let $|a|\in A^+$ be the element
$h_{\alpha_1}(|s_1|)h_{\alpha_2}(|s_2|)$. Let $g\in G_\mathbb{A}$ be decomposed as
\[
g= k_g a_g n_g,\quad k_g\in K_\mathbb{A}, ~a_g\in A_\mathbb{A}, ~ n_g\in N_\mathbb{A}.
\]
Note that $|a_g|$ is uniquely determined by $g$, although above decomposition is not unique. Then we may define
\[
\Phi_\nu(g)=|a_g|^\nu.
\]
The Eisenstein series is defined by
\[
E_\nu(g)=\sum_{\gamma\in \Gamma_\mathbb{Q}/\Gamma_\mathbb{Q}\cap B_\mathbb{Q}}\Phi_\nu(g\gamma), \quad g \in \mathbb A.
\]
When $g\in G_\R$ this coincides with our previous definition, since $\Gamma_\mathbb{Q}/\Gamma_\mathbb{Q}\cap B_\mathbb{Q}\cong
\Gamma/\Gamma\cap B$.

\medskip  For a positive real root $\alpha$, let $U_\alpha$
be the root subgroup $\{\chi_\alpha(u): u\in \mathbb{R}\}$.

Let $\psi$ be a non-trivial character of $N/(\Gamma\cap N)$. Then we have $\psi=\psi_1\psi_2$, where
$\psi_i$ is a character of $U_{\alpha_i}/\Gamma\cap U_{\alpha_i}$. This follows from the fact that $N/[N,N]\cong U_{\alpha_1}\times U_{\alpha_2}$.
We extend $\psi$ to a character of $N_\mathbb{A}/N_\mathbb{Q}$, where $N_\mathbb{Q}:=\Gamma_\mathbb{Q}\cap N_\mathbb{A}$ and
define the $\psi$-th Fourier coefficient of $E_\nu(g)$ along $B$ by
\[
E_{\nu,\psi}(g)=\int_{N/(\Gamma\cap N)}E_\nu(gn)\bar{\psi}(n)dn=\int_{N_\mathbb{A}/N_\mathbb{Q}}E_\nu(gn)\bar{\psi}(n)dn.
\]
Then $E_{\nu,\psi}(g)$ is a Whittaker function on $G$, that is, a function $W$ satisfying the relation
\[
W(g n) = \psi(n)W(g),
\]
for each $n\in N$.

We call $\psi$ {\em generic} if each $\psi_i$ is non-trivial for $i=1,2$. Then we have the following
vanishing result for generic characters, which in fact holds generally for infinite-dimensional Kac--Moody groups (cf. \cite{Li}).

\begin{lemma} \label{vanishing}
If $\psi$ is generic, then $E_{\nu,\psi}(g)=0$.
\end{lemma}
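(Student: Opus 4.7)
The plan is to unfold the defining integral
\[
E_{\nu,\psi}(g)\;=\;\int_{N_{\mathbb Q}\backslash N_{\mathbb A}}E_\nu(gn)\,\bar\psi(n)\,dn
\]
via the Bruhat decomposition $G_{\mathbb Q}=\bigsqcup_{w\in W}B_{\mathbb Q}\,w\,B_{\mathbb Q}$, producing a sum indexed by the Weyl group $W$, and then to show that each term vanishes. This mirrors the strategy of Garland--Miller--Patnaik \cite{GMP} in the affine case and of Liu \cite{Li} for general Kac--Moody groups, specialized here to the infinite dihedral $W$ and to an $N$ generated only by real-root subgroups.

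First I would parametrize $B_{\mathbb Q}\backslash B_{\mathbb Q}wB_{\mathbb Q}$ by representatives $uw$ with $u\in U_w(\mathbb Q)$, where $U_w$ is the subgroup of $N$ generated by the real-root subgroups $U_\alpha$ with $\alpha\in\Phi^{re}_+$ and $w^{-1}\alpha\in\Phi_-$. Substituting into the Fourier integral, interchanging the sum over $u$ with the $n$-integration (legitimate on the strip of absolute convergence from Section~5), and absorbing $u$ into $n$ to enlarge the quotient domain would yield an identity of the schematic form
\[
E_{\nu,\psi}(g)\;=\;\sum_{w\in W}\int_{L_w\backslash N_{\mathbb A}}\Phi_\nu(gnw)\,\bar\psi(n)\,dn
\]
for certain $\mathbb Q$-subgroups $L_w\subseteq N_{\mathbb A}$ determined by the unfolding.

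The $w=1$ term collapses to $\Phi_\nu(g)\int_{N_{\mathbb Q}\backslash N_{\mathbb A}}\bar\psi(n)\,dn$, and this vanishes because $\Phi_\nu$ is right $N$-invariant and $\psi$ is non-trivial on the compact quotient $N_{\mathbb Q}\backslash N_{\mathbb A}$. For $w\neq 1$ I would exploit the structure of the infinite dihedral $W$: every non-identity element has a unique reduced expression with a well-defined rightmost letter $r_i$, and the standard identity $\ell(wr_j)<\ell(w)\iff w\alpha_j\in\Phi_-$ forces $w\alpha_j\in\Phi_+$ for the other simple root $\alpha_j$, $j\neq i$. Consequently $U_{\alpha_j}$ is not absorbed into $L_w$; under the abelianization $N/[N,N]\cong U_{\alpha_1}\times U_{\alpha_2}$, through which $\psi=\psi_1\psi_2$ factors, the $w$-th integral then splits off a factor
\[
\int_{U_{\alpha_j}(\mathbb Q)\backslash U_{\alpha_j}(\mathbb A)}\bar\psi_j(u)\,du\;=\;0,
\]
which vanishes because $\psi_j$ is non-trivial by the genericity hypothesis. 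Every term in the unfolded sum thus vanishes.

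The principal obstacle is justifying the Bruhat unfolding and the Fubini factorization along $N/[N,N]$ rigorously in the Kac--Moody setting, since $N$ is only defined as a projective limit of finite-dimensional real-root unipotent groups rather than as a finite-dimensional algebraic group. I would handle this by truncating $N$ along its lower central (or depth) filtration, performing the above manipulations on each finite-dimensional quotient where the Bruhat decomposition and Fubini are unambiguous, and then passing to the projective limit using the absolute convergence from Section~5 and the projective-limit probability measure $dn$ on $N_{\mathbb A}/N_{\mathbb Q}$ introduced there. Once that bookkeeping is in place, the dihedral inversion-set combinatorics makes the term-by-term vanishing immediate.
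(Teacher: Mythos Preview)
Your approach is essentially the paper's: unfold via the Bruhat decomposition $G=\bigsqcup_{w\in W}N_w\,w\,B$ with $N_w=\prod_{\alpha\in\Phi_+\cap w\Phi_-}U_\alpha$, split $N=N_wN'_w$ with $N'_w=\prod_{\alpha\in\Phi_+\cap w\Phi_+}U_\alpha$, and observe that for every $w$ the inner integral over $N'_{w,\mathbb{A}}/N'_{w,\mathbb{Q}}$ collapses (by right $N$-invariance of $\Phi_\nu$) to a factor $\int_{U_{\alpha_j}(\mathbb{Q})\backslash U_{\alpha_j}(\mathbb{A})}\bar\psi_j=0$ for some simple root $\alpha_j$. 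The paper does not separate out $w=1$; that case is covered by the same observation.

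One slip in your combinatorics: the vanishing factor requires $U_{\alpha_j}\subset N'_w$, i.e.\ $w^{-1}\alpha_j>0$, and this is also what makes $w^{-1}U_{\alpha_j}w\subset N$ so that $\Phi_\nu(gnw)$ is independent of the $U_{\alpha_j}$-coordinate. Your rightmost-letter argument yields $w\alpha_j>0$, which is the wrong condition. Use the \emph{leftmost} letter of $w$ instead, or simply note (as the paper does) that for every $w\in W$ at least one of $w^{-1}\alpha_1,\,w^{-1}\alpha_2$ is positive---in the infinite dihedral group no element sends both simple roots to negatives. With that fix, your argument coincides with the paper's.
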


\begin{proof}
Recall that we have the Bruhat decomposition
\[
G=BWB=\bigsqcup_{w\in W}N_w w B,
\]
where $N_w=\prod_{\alpha\in\Phi_+\cap w\Phi_-}U_\alpha$. Then we have
\[
E_\nu(g)=\sum_{\gamma\in \Gamma/\Gamma\cap B}\Phi_\nu(g\gamma)=\sum_{w\in W}\sum_{\gamma\in  N_{w,\mathbb{Q}}}\Phi_\nu(g\gamma w).
\]
We introduce $N_w'=\prod_{\alpha\in\Phi_+\cap w\Phi_+}U_\alpha$. Then $N=N_wN'_w$ and it follows that
\begin{eqnarray*}
E_{\nu,\psi}(g)&=&\sum_{w\in W}\int_{N_\mathbb{A}/N_\mathbb{Q}}\sum_{\gamma\in  N_{w,\mathbb{Q}}}\Phi_\nu(gn\gamma w)\bar{\psi}(n)dn\\
&=&\sum_{w\in W}\int_{N_\mathbb{A}/N'_{w,\mathbb{Q}}}\Phi_\nu(gn w)\bar{\psi}(n)dn\\
&=&\sum_{w\in W}\int_{N_{w,\mathbb{A}}}\bar{\psi}(n_w)\int_{ N'_{w,\mathbb{A}}/ N'_{w,\mathbb{Q}}}\Phi_\nu(gn_w n_w'w)\bar{\psi}(n_w')dn_w'dn_w.
\end{eqnarray*}
For each $w\in W$, at least one of the two roots $w^{-1}\alpha_i$, $i=1,2$, is positive. Since $\Phi_\nu$ is right $N$-invariant, the inner integral of
the last equation involves a factor $$\int_{U_{\alpha_i, \mathbb{A}}/U_{\alpha_i, \mathbb{Q}}}\bar{\psi}_i(u)du=\int_{U_{\alpha_i}/\Gamma\cap U_{\alpha_i}}\bar{\psi}_i(u)du$$ for some $i$, which is zero by the assumption that $\psi$ is generic.
\end{proof}

Using this lemma, we may assume that $\psi=\psi_1$ or $\psi_2$, and is non-trivial. Any character of $U_{\alpha_i}$, which is trivial on $\Gamma\cap U_{\alpha_i}$, is of the form
\[
\psi_{i,n}: \chi_{\alpha_i}(u)\mapsto e^{2\pi i n u},\quad u\in\mathbb{R}
\]
for some $n\in\mathbb{Z}$.

\medskip  Before we state and prove the main result of this section, let us first recall some Fourier coefficients for $SL_2$.
For $F=\mathbb{R}$ or $\mathbb{Q}_p$, one has the Iwasawa decomposition $SL_2(F)=KAN$, where $K=SO(2,\mathbb{R})$ or $SL_2(\mathbb{Z}_p)$ is a maximal compact subgroup of $SL_2(F)$,
\[
A=\left\{\begin{pmatrix} a & 0 \\ 0 & a^{-1}\end{pmatrix}: a\in \R^\times\right\},\quad N=\left\{\begin{pmatrix} 1 & x \\ 0 & 1\end{pmatrix}: x\in \R\right\}.
\]
Let $g\in SL_2(F)$ be decomposed as
\[
g=k\begin{pmatrix} a_g & 0 \\ 0 & a_g^{-1}\end{pmatrix}n.
\]
For $s\in\mathbb{C}$, define a function $\Phi_s(g)$ on $SL_2(F)$ by
$\Phi_s(g)= |a_g|^{-s}$.
Clearly $\Phi_s$ is well-defined. Given a character $\psi$ of $F$, we shall consider the Fourier coefficient
\begin{equation}\label{sl2}
\int_F \Phi_s\begin{pmatrix} 1 & 0 \\ x & 1\end{pmatrix}\bar{\psi}(x)dx,
\end{equation}
which is convergent for Re $s>0$.
If we write $\begin{pmatrix} a_x & 0 \\ 0 & a_x^{-1}\end{pmatrix}$ for the $A$-component of $\begin{pmatrix} 1 & 0 \\ x & 0\end{pmatrix}$ in the Iwasawa decomposition, then
$|a_x|=\sqrt{1+x^2}$ for $F=\mathbb{R}$ and $|a_x|=\max(1, |x|_p)$ for $F=\mathbb{Q}_p$.

\medskip  The character $\psi(u)=e^{2\pi i u}$ of $\mathbb{R}/\mathbb{Z}$ corresponds to the character
$\psi_\infty \prod_p\psi_p$ of $\prod_p\Z_p\backslash\mathbb{A}/\mathbb{Q}$, where
\begin{eqnarray*}
&& \psi_\infty(x)=e^{2\pi i x}, \quad x\in\mathbb{R}\\
&& \psi_p(x)=e^{-2\pi i (\textrm{fractional part of }x)},\quad x\in\mathbb{Q}_p.
\end{eqnarray*}
Fix $y\in \mathbb{R}_+$ and associate an idele $(y_v)_v\in\mathbb{I}$ by $y_\infty=y$, $y_p=1$. For $n\in\mathbb{Z}$,
$n\neq 0$, we twist the $n$th power of $\psi$ by $y$, i.e. consider the characters $\psi_\infty(nyx)$ of $\mathbb{R}$
and $\psi_p(nx)$ of $\mathbb{Q}_p$.
Then the Fourier coefficients (\ref{sl2}) are given by the following functions. For $F=\mathbb{R}$, Re $s>1$ we have (cf. \cite[pp. 66--67]{Bu})
\begin{eqnarray*}
W^\infty_n(y, s)&=&\int^\infty_{-\infty} (1+x^2)^{-\frac{s}{2}}\bar{\psi}_\infty(nyx)dx\\
&=& 2\pi^{s/2}\Gamma(s/2)^{-1}|ny|^{\frac{s-1}{2}}K_{\frac{s-1}{2}}(2\pi|n|y),
\end{eqnarray*}
where $K_s(y)$ is the $K${\it-Bessel function}, also known as the {\it Macdonald Bessel function}, defined by
 \[
K_s(y)=\frac{1}{2}\int^\infty_0 e^{-y(t+t^{-1})/2}\ t^s\frac{dt}{t},\quad \textrm{Re }s>0.
\]
Write $|n|=\prod_p p^{n_p}$ into the primary decomposition. Then for $p<\infty$, Re $s>1$, we have
\begin{eqnarray*}
W^p_n(s)&=&1+\sum^\infty_{i=1}p^{-is}\int_{p^{-i}\mathbb{Z}_p^\times}\bar{\psi}_p(nx)dx\\
&=&\frac{(1-p^{-s})(1-p^{(n_p+1)(1-s)})}{1-p^{1-s}}.
\end{eqnarray*}
In the above computations we have made use of the Iwasawa decomposition for $SL_2$. Now we form a product
\begin{equation}\label{whittaker}
W_n(y,s)=W^\infty_n(y,s)\prod_p W^p_n(s)=2\sigma_{1-s}(|n|)|ny|^{\frac{s-1}{2}}K_{\frac{s-1}{2}}(2\pi|n|y)\frac{1}{\xi(s)},\quad \textrm{Re }s>1,
\end{equation}
where $\sigma_s$ is the divisor power sum function defined by $\sigma_s(n)=\sum_{d|n}d^{s}$ for $n\in\mathbb{N}$.

\begin{theorem} \label{thm-df}
Assume that $\nu$ satisfies Godement's criterion. Then for $a\in A'$, $i\in I$, $n\in\mathbb{Z}$, $n \neq 0$, one has the $\psi_{i,n}$-th Fourier coefficient
\[
E_{\nu,\psi_{i,n}}(a)=\sum_{w\in W,\ w^{-1}\alpha_i<0}a^{w(\nu+\rho)-\rho}c_{\psi_{i,n}}(\nu, w)(a),
\]
where
\[
c_{\psi_{i,n}}(\nu,w)(a)=W_n(a^{-\alpha_i}, 1+w(\nu+\rho)(h_{\alpha_i})) \prod_{\substack{\alpha\in \Phi_+\cap w^{-1}\Phi_- \\ \alpha\neq -w^{-1}\alpha_i}}\frac{\xi(-(\nu+\rho)(h_\alpha))}{\xi(1-(\nu+\rho)(h_\alpha))}
\]
with $W_n(y,s)$ being defined by $($\ref{whittaker}$)$.
\end{theorem}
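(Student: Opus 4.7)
\emph{Plan of proof.} The strategy is to follow the template of Lemma \ref{vanishing}: unfold using the Bruhat decomposition, identify which Weyl group elements contribute after integration against $\psi_{i,n}$, and evaluate the surviving adelic integrals by a Gindikin--Karpelevich calculation that tracks the twist by $\psi_{i,n}$ on a single root subgroup.

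First I would reproduce the unfolding from the proof of Lemma \ref{vanishing} with $\psi = \psi_{i,n}$, obtaining
\begin{equation*}
E_{\nu,\psi_{i,n}}(a) = \sum_{w\in W} \int_{N_{w,\mathbb{A}}} \bar{\psi}_{i,n}(n_w)\int_{N'_{w,\mathbb{A}}/N'_{w,\mathbb{Q}}} \Phi_\nu(a\,n_w\,n'_w\,w)\,\bar{\psi}_{i,n}(n'_w)\,dn'_w\,dn_w.
\end{equation*}
The character $\psi_{i,n}$ is trivial on $U_{\alpha_j}$ for $j\neq i$ and non-trivial on $U_{\alpha_i}$. Hence if $w^{-1}\alpha_i>0$ then $U_{\alpha_i}\subseteq N'_w$ and the inner integral vanishes exactly as in Lemma \ref{vanishing}. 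For $w$ with $w^{-1}\alpha_i<0$, $\psi_{i,n}$ is trivial on $N'_w$, and using the right $N$-invariance of $\Phi_\nu$ together with the probability normalization on $N'_{w,\mathbb{A}}/N'_{w,\mathbb{Q}}$ the inner integral collapses to $\Phi_\nu(a\,n_w\,w)$, leaving
\begin{equation*}
E_{\nu,\psi_{i,n}}(a) = \sum_{\substack{w\in W\\ w^{-1}\alpha_i<0}}\int_{N_{w,\mathbb{A}}}\Phi_\nu(a\,n_w\,w)\,\bar{\psi}_{i,n}(n_w)\,dn_w.
\end{equation*}

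Next I would carry out the Gindikin--Karpelevich calculation as in Garland \cite{G04}. Fix a reduced expression $w = s_{j_1}\cdots s_{j_\ell}$ and factor $N_{w,\mathbb{A}} = \prod_k U_{\beta_k,\mathbb{A}}$ with $\beta_k = s_{j_1}\cdots s_{j_{k-1}}\alpha_{j_k}$ enumerating the finite set $\Phi_+\cap w\Phi_-$. Every $\beta_k$ is a real root (since $W$ preserves the sign of imaginary roots) and sits inside a rank-one $SL_2$-triple, so the integral factors as a product of classical $SL_2$-Iwasawa integrals. Pushing $w$ to the left one factor at a time produces the shift $a^{w(\nu+\rho)-\rho}$ and replaces the product over $\beta\in\Phi_+\cap w\Phi_-$ by a product over $\alpha\in \Phi_+\cap w^{-1}\Phi_-$ via the bijection $\beta\leftrightarrow -w^{-1}\beta$. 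Under this bijection $\beta = \alpha_i$ corresponds to $\alpha = -w^{-1}\alpha_i$, and the twist by $\bar{\psi}_{i,n}$ survives on exactly this one factor.

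Finally, the factors are evaluated individually. For $\alpha\in\Phi_+\cap w^{-1}\Phi_-$ with $\alpha\neq -w^{-1}\alpha_i$, the untwisted local integrals assemble, via the computations of $W^\infty_n$ and $W^p_n$ recalled in Section \ref{sec-df} at the trivial character, into the Gindikin--Karpelevich ratio $\xi(-(\nu+\rho)(h_\alpha))/\xi(1-(\nu+\rho)(h_\alpha))$. For $\alpha = -w^{-1}\alpha_i$, the twisted local integrals reproduce the adelic Whittaker function (\ref{whittaker}), giving $W_n(a^{-\alpha_i},s)$ with parameter $s = 1 + w(\nu+\rho)(h_{\alpha_i}) = 1-(\nu+\rho)(h_\alpha)$ via the identity $w(\nu+\rho)(h_{\alpha_i})=(\nu+\rho)(h_{w^{-1}\alpha_i})$. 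Multiplying the factors yields the asserted formula. The principal obstacle I foresee is justifying the absolute convergence of the resulting infinite sum over $w$; for this I would majorize $|W_n(y,s)|$ against the corresponding completed-zeta ratio and reduce to the convergence of the constant term $E^\sharp_\nu$ established in Section 5, which is where Godement's criterion and the cone $a\in A'$ enter.
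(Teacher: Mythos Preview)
Your plan is correct and matches the paper's proof: unfold via Bruhat to reduce to $w$ with $w^{-1}\alpha_i<0$, parametrize $N_w$ by root subgroups, extract $a^{w(\nu+\rho)-\rho}$, and separate the single $\psi_{i,n}$-twisted $SL_2$-integral (yielding $W_n$) from the untwisted ones (yielding the Gindikin--Karpelevich zeta ratios). The one technical step the paper makes more explicit than you do is the mechanism by which the integral over $N_{w,\mathbb{A}}$ actually factors: it isolates the $\alpha_i$-coordinate first via the Iwasawa decomposition $\chi_{-\beta_\ell}(u)=k(u)a(u)n(u)$ and uses that the projected conjugation $\pi\circ\mathrm{Ad}(n(u))$ on $w'^{-1}N_{w'}w'$ is unimodular, which is what justifies splitting off the Whittaker integral from the remaining $(\ell-1)$-dimensional Gindikin--Karpelevich integral.
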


\begin{proof} As in the proof of Lemma \ref{vanishing}, we see that
\[
E_{\nu,\psi}(a)=\sum_{w\in W,\ w^{-1}\alpha_i<0}\int_{N_{w,\mathbb{A}}}\Phi_\nu(an_w w)\bar{\psi}(n_w)dn_w.
\]
We follow the computation in \cite{G04} and \cite[4.4]{Li}.
Let
$w^{-1}=r_{k_1}\cdots r_{k_{\ell}}$ be the reduced expression of $w^{-1}$, where $\ell=\ell(w)$ and
$k_j=1$ or $2$ for $j=1,\ldots, \ell$. Let
\[
\Phi_{w^{-1}}=\Phi_+\cap
w^{-1}\Phi_-=\{\beta_1,\ldots,\beta_{\ell}\},\] where $\beta_j= r_{k_1}\cdots
r_{k_{j-1}}\alpha_{k_j}.$ Then
\[
\Phi_{w}=\Phi_+\cap w\Phi_-=\{\gamma_1,\ldots,\gamma_{\ell}\},
\]
where
$\gamma_j=-w\beta_j=r_{k_{\ell}}\cdots r_{k_{j+1}}\alpha_{k_j}.$ Note that
\[
\beta_1+\cdots+\beta_{\ell}=\rho-w^{-1}\rho,\quad \gamma_1+\cdots+\gamma_{\ell}=\rho-w\rho.
\]
From these formulas it is clear that if $w^{-1}\alpha_i<0$, then $\gamma_{\ell}=\alpha_i=-w\beta_{\ell}$.
By decomposing $N_w$ into a product of root subgroups, we get
\begin{eqnarray*}
&&\int_{N_{w,\mathbb{A}}}\Phi_\nu(an_w w)\bar{\psi}(n_w)dn_w\\
&=&\int_{\mathbb{A}^{\ell}}\Phi_\nu \big(a\chi_{\gamma_{\ell}}(u_{\ell})\cdots \chi_{\gamma_1}(u_1)w\big)\bar{\psi}_{i,n}(u_{\ell})du_{\ell}\cdots du_1\\
&=&\int_{\mathbb{A}^{\ell}}\Phi_\nu \big(\chi_{\gamma_{\ell}}(a^{\gamma_{\ell}}u_{\ell})\cdots \chi_{\gamma_1}(a^{\gamma_1}u_1)aw\big)\bar{\psi}_{i,n}(u_{\ell})du_{\ell}\cdots du_1\\
&=&\int_{\mathbb{A}^{\ell}}a^{w\nu-\gamma_1-\cdots-\gamma_{\ell}}\Phi_\nu \big(\chi_{\gamma_{\ell}}(u_{\ell})\cdots \chi_{\gamma_1}(u_1)w\big)\bar{\psi}_{i,n}(a^{-\alpha_i}u_{\ell})du_{\ell}\cdots du_1\\
&=&a^{w(\nu+\rho)-\rho}\int_{\mathbb{A}^{\ell}}\Phi_\nu \big(\chi_{-\beta_{\ell}}(u_{\ell})\cdots \chi_{-\beta_1}(u_1)\big)\bar{\psi}_{i,n}(a^{-\alpha_i}u_{\ell})du_{\ell}\cdots du_1.
\end{eqnarray*}
Let $\chi_{-\beta_{\ell}}(u)=k(u)a(u)n(u)$ be the Iwasawa decomposition, with $k(u)\in K$, $n(u)\in U_{\beta_{\ell}}$, $a(u)\in A$.
Put $w'^{-1}=r_{k_1}\cdots r_{k_{\ell-1}}$, then $\{\beta_1,\ldots, \beta_{\ell-1}\}=\Phi_+\cap w'^{-1}\Phi_-$. Consider the decomposition
$N=N_{w'}N'_{w'}$ (see the proof of Lemma \ref{vanishing}). Then we have
\[
U_{-\beta_{\ell-1}}\cdots U_{-\beta_1}=w'^{-1}N_{w'}w'.
\]
Let us define the projection
\[
\pi: w'^{-1}Nw'\to w'^{-1}N_{w'}w'.
\]
Since $U_{-\beta_1},\ldots, U_{-\beta_{\ell-1}},$ $U_{\beta_{\ell}}\subset w'^{-1}Nw'$, the following map
\[
\pi\circ\textrm{Ad}(n(u)): w'^{-1}N_{w'}w'\to w'^{-1}N_{w'}w',
\]
is well-defined and unimodular. From this fact, and noting that $\Phi_\nu$ is right invariant under $w'^{-1}N'_{w'}w'\subset N$, it follows
\begin{eqnarray*}
&&\int_{\mathbb{A}^{\ell}}\Phi_\nu \big(\chi_{-\beta_{\ell}}(u_{\ell})\cdots \chi_{-\beta_1}(u_1)\big)\bar{\psi}_{i,n}(a^{-\alpha_i}u_{\ell})du_{\ell}\cdots du_1\\
&=&\int_{\mathbb{A}}a(u_{\ell})^{\nu+\beta_1+\cdots\beta_{\ell-1}}\bar{\psi}_{i,n}(a^{-\alpha_i} u_{\ell})du_{\ell}\int_{\mathbb{A}^{\ell-1}}\Phi_\nu \big(\chi_{-\beta_{\ell-1}}(u_{\ell-1})\cdots \chi_{-\beta_1}(u_1)\big)du_{\ell-1}\cdots du_1\\
&=&\int_{\mathbb{A}}a(u_{\ell})^{\nu+\rho-w'^{-1}\rho}\bar{\psi}_{i,n}(a^{-\alpha_i} u_{\ell})du_{\ell}\int_{\mathbb{A}^{\ell-1}}\Phi_\nu \big(\chi_{-\beta_{\ell-1}}(u_{\ell-1})\cdots \chi_{-\beta_1}(u_1)\big)du_{\ell-1}\cdots du_1.
\end{eqnarray*}
Note that $w'^{-1}\rho(h_{\beta_{\ell}})=\rho(w'h_{\beta_{\ell}})=\rho(h_{\alpha_i})=1$. Then the first integral in the last equation equals
$W_n(a^{-\alpha_i}, 1-(\nu+\rho)(h_{\beta_{\ell}}))=W_n(a^{-\alpha_i}, 1+w(\nu+\rho)(h_{\alpha_i}))$, and the second one, by Gindikin-Karpelevich formula, equals
\[
\prod^{\ell-1}_{j=1}\frac{\xi(-(\nu+\rho)(h_{\beta_j})\big)}{\xi\big(1-(\nu+\rho)(h_{\beta_j}))}.
\]
Hence our assertion follows.
\end{proof}


\section{Entirety of cuspidal Eisenstein series}

In analogy with \cite[Theorem 5.2]{BK} and \cite{GMP}, we shall prove that the Eisenstein series on $G$ induced from cusp forms on $SL_2$ are entire functions.
Let $P$ be the maximal parabolic subgroup of $G$ generated by $B$ and the simple reflection $r_1$, and $P=MU$ be the Levi decomposition, where
$M$ is the Levi subgroup and $U$ is the pro-unipotent radical of $P$. Let $L\cong SL_2$ be the subgroup generated by $\chi_{\pm \alpha_1}(t)$, $t\in \mathbb{R}$, and let $A_1=\langle h_{\alpha_1}(t): t\in\mathbb{R}^\times\rangle$, $H=\{a\in A: a^{\alpha_1}=\pm1\}$. Then we have an almost direct product $A=A_1\times H$, and $M=LH$.
We introduce
\begin{eqnarray*}
&&H^+=\{h_{\alpha_1}(t^m) h_{\alpha_2}(t^2): t\in\mathbb{R}_+\}\cong K\cap H\backslash H,\\
&&  A_1^+=\{h_{\alpha_1}(t): t\in\mathbb{R}_+\}\cong K\cap A_1\backslash A_1.
\end{eqnarray*}
From the Iwasawa decomposition $G=KP=KMU$, the following maps
\[
\textrm{Iw}_L: G\to K\cap L\backslash L, \quad \textrm{Iw}_{H^+}: G\to H^+
\]
are well-defined. Similarly, using the Iwasawa decomposition for $L$ we may define the map
\[
\textrm{Iw}_{A_1^+}: K\cap L\backslash L\to A_1^+. \]
For convenience we also denote by Iw$_{A^+}$ the map $G\to A^+,$ $g=k_ga_gn_g\mapsto a_g$ which we used previously. Then it is clear that
\begin{equation}\label{iwamap} \textrm{Iw}_{A^+}=(\textrm{Iw}_{A_1^+}\circ\textrm{Iw}_L)\times \textrm{Iw}_{H^+}: G\to A^+\cong A_1^+\times H^+.
\end{equation}

Let $\varpi_2$ be the fundamental weight corresponding to $\alpha_2$, that is
\[
\varpi_2= \frac{m\alpha_1+2\alpha_2}{4-m^2}.
\]
Note that $\varpi_2$ is trivial on $A_1=L\cap A$, hence (\ref{iwamap}) implies that
\begin{equation} \label{varpi2}
\textrm{Iw}_{A^+}(\cdot)^{\varpi_2}=\textrm{Iw}_{H^+}(\cdot)^{\varpi_2}.
\end{equation} Similarly, since $\alpha_1$ is trivial on $H$ we also have
\begin{equation}\label{alpha1}
       \textrm{Iw}_{A^+}(\cdot)^{\alpha_1}=\textrm{Iw}_{A_1^+}\circ\textrm{Iw}_L(\cdot)^{\alpha_1}.
\end{equation}
We may regard $\varpi_2$ as an algebraic character of $M$.
For $s\in\mathbb{C}$ define the Eisenstein series
\[
E_s(g)= \sum_{\gamma\in \Gamma/ \Gamma\cap P}\textrm{Iw}_{H^+}(g\gamma)^{s\varpi_2}.
\]
Moreover, for an unramified cusp form $f$ on $SL_2(\mathbb{R})/SL_2(\mathbb{Z})$, that is, a $SO(2)$--invariant cusp form, we define
\[
E_{s,f}(g)=\sum_{\gamma\in \Gamma/ \Gamma\cap P}\textrm{Iw}_{H^+}(g\gamma)^{s\varpi_2} f\big(\textrm{Iw}_L(g\gamma)\big).
\]

\begin{theorem}
Assume that $\mathrm{Re}~s<-2$. Then for any compact subset $A_c'$ of $A'$, there is a measure zero subset
$N_0$ of $N$ such that $E_s(g)$ converges absolutely for $g\in KA_c'N'$, where $N'=N-N_0$.
\end{theorem}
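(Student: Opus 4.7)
The plan is to adapt the proof of Theorem 5.1 to the maximal parabolic setting. By (\ref{varpi2}), the summand $\textrm{Iw}_{H^+}(g\gamma)^{s\varpi_2}$ coincides with $\Phi_\nu(g\gamma)$ for the Borel quasi-character $\nu := s\varpi_2$, which satisfies $\nu(h_{\alpha_1}) = 0$ and $\nu(h_{\alpha_2}) = s$; Godement's criterion therefore fails in the $\alpha_1$-direction, and Theorem 5.1 does not apply directly. The summation over the smaller coset space $\Gamma/\Gamma \cap P$ (rather than $\Gamma/\Gamma \cap B$) is what compensates: in the Gindikin--Karpelevich expansion of the constant term along $B$, the Weyl sum effectively reduces to $W^P$, the set of minimal-length representatives of $W/W_P$ where $W_P = \langle r_1 \rangle$. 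Explicitly,
\[
W^P = \{1\} \cup \{(r_1 r_2)^{n+1} : n \geq 0\} \cup \{r_2(r_1 r_2)^n : n \geq 0\},
\]
so the families $r_1(r_2 r_1)^n$ and $(r_2 r_1)^{n+1}$ that appeared in the Borel proof are absent here.

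First I would compute the constant term $E_s^\sharp(a) := \int_{N/\Gamma \cap N} E_s(an)\,dn$, formally repeating the derivation in Section 5 but using the parabolic Bruhat decomposition $G = \bigsqcup_{w \in W^P} B w P$. The resulting expansion has the form
\[
E_s^\sharp(a) = \sum_{w \in W^P} a^{w(\nu+\rho) - \rho}\, c_P(\nu, w), \qquad c_P(\nu, w) = \prod_{\alpha \in \Phi_+ \cap w^{-1}\Phi_-,\, \alpha \neq \alpha_1} \frac{\xi(-(\nu+\rho)(h_\alpha))}{\xi(1 - (\nu+\rho)(h_\alpha))},
\]
in which the root $\alpha_1$ of the Levi $L$ is omitted. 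The exponential bound $|c_P(\nu, w)| \leq C_\varepsilon^{\ell(w)}$ then carries over verbatim from the proof of Theorem 5.1.

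The main obstacle is verifying positivity and geometric growth of the coefficients of $\alpha_1, \alpha_2$ in $w(\nu+\rho)-\rho$ for every nontrivial $w \in W^P$. Writing $\nu = s_1\alpha_1 + s_2\alpha_2$ with $s_1 = m|s|/(m^2-4)$ and $s_2 = 2|s|/(m^2-4)$, substitution into (\ref{w2}) and (\ref{wrho}) via the asymptotics (\ref{an}), (\ref{bn}) shows that both families $(r_1 r_2)^{n+1}$ and $r_2(r_1 r_2)^n$ yield
\[
w(\nu+\rho) - \rho = D_\nu \gamma^{\ell(w) + O(1)}\alpha_1 + D_\nu \gamma^{\ell(w) + O(1)}\alpha_2 + O(1),
\]
with the common leading constant $D_\nu := (\gamma s_1 - s_2 - \gamma/(\gamma-1))/(\gamma^2-1)$ from the Borel proof. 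Using $\gamma^2 - m\gamma + 1 = 0$ and the consequence $(\gamma-1)^2 = (m-2)\gamma$, the positivity $D_\nu > 0$ reduces to the explicit inequality $|s|(\gamma + 1) > m + 2$, which is strictly weaker than $|s| > 2$ since $2\gamma > m$. Crucially, the companion constant $C_\nu = (\gamma s_2 - s_1 - \gamma/(\gamma-1))/(\gamma^2-1)$ attached in the Borel case to the excluded families $r_1(r_2 r_1)^n$ and $(r_2 r_1)^{n+1}$ becomes negative under $\nu = s\varpi_2$ for $|s|$ only slightly above $2$, so both the specific shape of $\varpi_2$ and the restriction to $W^P$ are essential.

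Combining the doubly-geometric decay of $a^{w(\nu+\rho) - \rho}$ for $a \in A_c' \subset A'$ (where $a^{\alpha_i} < 1$ uniformly on the compact $A_c'$) against the at-most-exponential factor $C_\varepsilon^{\ell(w)}$ yields absolute convergence of $E_s^\sharp$ on $A_c'$. The passage from absolute convergence of the constant term to almost-everywhere convergence of $E_s(g)$ on $KA_c'N'$, producing the measure-zero exceptional set $N_0 \subset N$, then follows by Fatou's lemma exactly as in the corollary to Theorem 5.1.
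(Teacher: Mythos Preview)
Your overall approach matches the paper's: both reduce to the constant term
\[
E_s^\sharp(a)=\sum_{w\in W^P} a^{w(s\varpi_2+\rho)-\rho}\,c(s\varpi_2,w)
\]
over the minimal-length representatives $W^P=W_1$, bound the exponent using the asymptotics of $B_n$, and bound the $c$-factor by $C^{\ell(w)}$. Your identification of $D_\nu$ as the governing constant and the reduction to $|s|>1+\gamma^{-1}$ agree with the paper's condition (\ref{srange}).

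There is, however, a genuine gap in your treatment of the $c$-factor. You assert that the bound $|c_P(\nu,w)|\le C_\varepsilon^{\ell(w)}$ ``carries over verbatim from the proof of Theorem~5.1,'' but it does not: that proof used Godement's criterion to obtain $-\mathrm{Re}(\nu+\rho)(h_{\alpha_i})>1$ for \emph{both} simple roots, whereas here $\nu=s\varpi_2$ gives $-(\nu+\rho)(h_{\alpha_1})=-1$. The simple-root check therefore fails, and one must instead verify the inequality directly on the roots that actually appear in $\Phi_+\cap w^{-1}\Phi_-$ for $w\in W^P$. The paper does exactly this: for $w=(r_1r_2)^n$ these roots are $B_i\alpha_1+B_{i+1}\alpha_2$ with $0\le i\le 2n-1$, and one computes
\[
-(s\varpi_2+\rho)(h_\alpha)=(-s-1)B_{i+1}-B_i>B_{i+1}-B_i\ge 1
\]
using $s<-2$. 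This is precisely where the hypothesis $\mathrm{Re}\,s<-2$ is consumed; your exponent estimate via $D_\nu$ only needed the weaker $s<-1-\gamma^{-1}$, so as written your argument never uses the full strength of the assumption. Incidentally, the exclusion ``$\alpha\neq\alpha_1$'' in your product defining $c_P$ is redundant: $w\alpha_1>0$ for $w\in W^P$ already forces $\alpha_1\notin\Phi_+\cap w^{-1}\Phi_-$.
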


\begin{proof}
Our proof is similar to previous sections  where we induce Eisenstein series from characters on Borel subgroups.  We have the Bruhat decomposition
\[
G=\bigsqcup_{w\in W_1} BwP=\bigsqcup_{w\in W_1}N_wwP,
\]
where $W_1=\{w\in W: w\alpha_1>0\}$ is a set of representatives of minimal length for the quotient $W/\langle r_1\rangle$. Formal calculations show that the constant term of $E_s(a)$ for $a\in A'$ is
\[
E^\sharp_s(a)=\sum_{w\in W_1} a^{w(s\varpi_2+\rho)-\rho} c(s\varpi_2, w).
\]
Then we only need to prove the absolute convergence of $E^\sharp_s(a)$ under the conditions of the theorem.
We may assume that $s$ is real. There are two cases for $w\in W_1$: $w=(r_1r_2)^{n}$ or $r_2(r_1r_2)^n$. We shall only deal with the first case, and the second case
can be treated similarly.

Assume $w=(r_1r_2)^{n}$. Then using (\ref{w2}) and (\ref{wrho}), for $a\in A'$ there exists a positive constant $M$ only depending on $a$ such that
\[
a^{w(s\varpi_2+\rho)-\rho}\leq Ma^{\gamma C_n \alpha_1+C_n\alpha_2},
\]
where
\[
C_n=\frac{\gamma^{2n}}{(m^2-4)(\gamma+1)(\gamma-1)^2}\big[-s(m\gamma-2)(\gamma-1)-(m^2-4)\gamma\big].
\]
It follows that $C_n\to\infty$ if and only if
\begin{equation}\label{srange}
s<-\frac{(m^2-4)\gamma}{(m\gamma-2)(\gamma-1)}=-1-\gamma^{-1}.
\end{equation}

We also need to consider the factor $c(s\varpi_2, w)$. One can show that
\[
\Phi_+\cap w^{-1}\Phi_-=\{B_i \alpha_1 + B_{i+1}\alpha_2: i=0,\ldots, 2n-1\}.
\]
For $\alpha= B_i\alpha_1 + B_{i+1}\alpha_2$ we have
\[
-(s\varpi_2+\rho)(h_\alpha)=(-s-1)B_{i+1}-B_i>B_{i+1}-B_i\geq 1
\]
when $s<-2$. This implies that $c(s\varpi_2, w)\leq C^{\ell(w)}$ for some constant $C$ depending on $s$.

Combining above analysis it is easy to see the convergence of $E^\sharp_s(a)$ for $a\in A'$.
\end{proof}

We remark that, from $B_{i+1}>\gamma B_i$ it follows that
\[
(-s-1)B_{i+1}-B_i\to\infty \quad \textrm{as} \quad i\to\infty
\]
when $s<-1-\gamma^{-1}$. Together with (\ref{srange}), this suggests the following

\begin{conjecture}
$E_s(g)$ converges absolutely for $g\in KA'N$ and $\mathrm{Re}~s<-1-\gamma^{-1}$.
\end{conjecture}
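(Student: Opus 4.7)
The plan is to refine the proof of the preceding theorem on two fronts: push the Gindikin--Karpelevich estimate down to the threshold $s<-1-\gamma^{-1}$, and then upgrade almost-everywhere convergence to genuine pointwise convergence on $KA'N$.

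The exponential factor $a^{w(s\varpi_2+\rho)-\rho}$ already decays geometrically in $\ell(w)$ for every $a\in A'$ precisely when $s<-1-\gamma^{-1}$; this is exactly (\ref{srange}) combined with the asymptotic $B_{i+1}/B_i\to\gamma$ discussed in the remark. So no new input is needed on that side. The obstruction in the preceding theorem is the bound $|c(s\varpi_2,w)|\leq C^{\ell(w)}$, which was obtained from the pointwise inequality $(-s-1)B_{i+1}-B_i>1+\varepsilon$ at every index $i\geq 0$; the $i=0$ term alone forces $s<-2$. My approach is to split the product defining $c(s\varpi_2,w)$ into a finite initial block and an asymptotic tail. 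For $s<-1-\gamma^{-1}-\varepsilon$ one has $(-s-1)B_{i+1}-B_i\geq \varepsilon\gamma B_i+o(B_i)$, so there is an $i_0$ independent of $w$ past which the Godement-type estimate applies uniformly, contributing a tail bound of the form $C^{\ell(w)-i_0}$. The finitely many factors with $i<i_0$ appear in every sufficiently long $w\in W_1$ but are fixed finite constants, since $\xi$ has no real zeros on $(0,1)$ and the relevant arguments avoid the poles $\{0,1\}$ for generic $s$. Absorbing these into a single constant $C_0$ gives $|c(s\varpi_2,w)|\leq C_0\, C^{\ell(w)}$, and together with the geometric decay of the exponential factor this yields absolute convergence of the constant term $E_s^\sharp(a)$ on all of $A'$.

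The genuinely hard part is passing from convergence of the constant term to absolute convergence of $E_s(g)$ at \emph{every} point of $KA'N$, rather than on a co-null subset. Integrating $E_s$ against characters of $N/\Gamma\cap N$ and appealing to Fubini can only produce almost-everywhere statements, so a pointwise argument is required. The natural strategy is to adapt Garland's method from \cite{G06}, which bounds each individual term $\Phi_s(g\gamma)$ in terms of matrix coefficients of $\gamma$ on a highest-weight representation and uses the explicit transformation rules under the real root subgroups. As the authors note in the introduction, this argument does not extend to the rank $2$ hyperbolic setting: the Weyl orbit grows exponentially at rate $\gamma$ rather than polynomially, and imaginary root multiplicities are not yet sufficiently understood to control the required matrix coefficients. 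Finding a substitute for Garland's $SL_2$-reduction that is compatible with this exponential growth --- probably by exploiting the explicit formulas (\ref{w1})--(\ref{wrho}) and a finer analysis of how $\Phi_s(g\gamma)$ varies on the Tits cone --- is the main obstacle, and is precisely why the statement remains conjectural.
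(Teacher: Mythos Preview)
The statement is a \emph{conjecture} in the paper; there is no proof to compare against. The paper offers only the heuristic in the remark immediately preceding it: the threshold $s<-1-\gamma^{-1}$ is precisely where both the exponential-factor condition (\ref{srange}) holds and the arguments $(-s-1)B_{i+1}-B_i$ tend to infinity.

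Your write-up is not a proof either, and you are candid about that. The first half --- extending convergence of the constant term $E^\sharp_s$ from $\mathrm{Re}\,s<-2$ down to $\mathrm{Re}\,s<-1-\gamma^{-1}$ by splitting $c(s\varpi_2,w)$ into a fixed finite block and a tail whose arguments exceed $1+\varepsilon$ --- is sound and in fact goes a step beyond the paper, which only records the divergence of the arguments without drawing the conclusion for $c(s\varpi_2,w)$. The inequality $B_{i+1}>\gamma B_i$ does force all but finitely many factors into the Godement regime, and for real $s$ the remaining numerator arguments lie in $(0,1)$ where $\xi$ is nonzero, while the denominator arguments have real part greater than $1$. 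So this part would yield almost-everywhere convergence of $E_s$ on $KA'N$ under the weaker hypothesis.

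Where you correctly stop is the upgrade from almost-everywhere to everywhere convergence. This is exactly the obstruction the paper already flags for the parallel Conjecture \ref{conj-every}: the method of \cite{G06} for affine groups does not transfer to the hyperbolic setting, and no substitute is available. Your diagnosis of why --- exponential Weyl-orbit growth and the lack of control on imaginary root multiplicities --- matches the paper's introduction. In short, your proposal and the paper agree that the statement is open, and for the same structural reason; you have simply pushed the constant-term analysis one step further than the paper chose to record.
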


Now let us state the main result of this section.

\begin{theorem} Let $f$ be an unramified cusp form on $SL_2$. For any compact subset $A_c'$ of $A'$, there is a measure zero subset $N_0$ of $N$ such that
$E_{s,f}(g)$ is an entire function of $s\in \mathbb{C}$ for $g\in KA_c'N'$, where $N'=N-N_0$.
\end{theorem}

We shall follow the strategy in \cite{GMP} to prove Theorem 7.3.  The following lemma is in analogy with \cite[Lemma 3.2]{GMP}, where we set $x^y:=yxy^{-1}$ for $x,y \in G$.

\begin{lemma}
If $\gamma\in \Gamma\cap BwB$, then
\[
\mathrm{Iw}_{A^+}(g\gamma)=\big(\mathrm{Iw}_{A^+}g\big)^{w^{-1}}\cdot \mathrm{Iw}_{A^+}(n_w w)
\]
for some $n_w\in N_{w,\mathbb{A}}$ depending on $\gamma$ and $g$.
\end{lemma}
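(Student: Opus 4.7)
The plan is to compute $\mathrm{Iw}_{A^+}(g\gamma)$ by combining the Bruhat decomposition of $\gamma$ with the Iwasawa decomposition of $g$, and then using the idelic product formula to absorb the $A$-factor contributed by $b$.

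First I would invoke Bruhat to write $\gamma=\tilde n_w\, w\, b$ with $\tilde n_w\in N_{w,\mathbb Q}$ and $b=a_b n_b\in B_{\mathbb Q}$ ($a_b\in A_{\mathbb Q}$, $n_b\in N_{\mathbb Q}$), and Iwasawa to write $g=k_g a_g n_g$. Two basic moves are available: (a) conjugation by $A$ preserves each root group, so any $A$-element commutes past any $N$-element modulo conjugation of the $N$-element; and (b) $w\in K$ normalizes $A$, with $w^{-1}A^+w=A^+$ since $A^+$ is the identity component of the algebraic group $A$ and $W$ acts by algebraic automorphisms. Applying these in turn to $g\gamma=k_g a_g n_g \tilde n_w\, w\, a_b\, n_b$ rearranges it as
\[
g\gamma=k_g\cdot n^{(1)}\cdot w\cdot a_g^{w}\,a_b\cdot n_b,\qquad a_g^{w}:=w^{-1}a_g w\in A_{\mathbb A}^+,
\]
for some $n^{(1)}\in N_{\mathbb A}$ depending on both $\gamma$ and $g$.

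Next I would Iwasawa-decompose $n^{(1)}w=k'\tilde a\,n'$ and commute the central $A$-factor $a_g^{w}a_b$ past $n'$, obtaining
\[
g\gamma=(k_gk')\cdot\bigl(\tilde a\, a_g^{w}\, a_b\bigr)\cdot n'',\qquad n''\in N_{\mathbb A}.
\]
Extracting the $A^+$-component yields $\mathrm{Iw}_{A^+}(g\gamma)=|\tilde a|\cdot|a_g^{w}|\cdot|a_b|$. Here the arithmetic hypothesis enters: because $a_b\in A_{\mathbb Q}$ has coordinates in $\mathbb Q^\times$, the idelic product formula forces $|a_b|=1$. Using $|a_g^{w}|=w^{-1}a_g w$ (since $a_g^{w}\in A^+$) and $|\tilde a|=\mathrm{Iw}_{A^+}(n^{(1)}w)$, I conclude
\[
\mathrm{Iw}_{A^+}(g\gamma)=(\mathrm{Iw}_{A^+}g)^{w^{-1}}\cdot\mathrm{Iw}_{A^+}(n^{(1)}w).
\]

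Finally, to promote $n^{(1)}\in N_{\mathbb A}$ to an element of $N_{w,\mathbb A}$, I would use the decomposition $N=N_wN'_w$ from the proof of Lemma~\ref{vanishing} to write $n^{(1)}=n_w\, n'_w$ with $n_w\in N_{w,\mathbb A}$ and $n'_w\in N'_{w,\mathbb A}$. Because every $\alpha\in\Phi_+\cap w\Phi_+$ satisfies $w^{-1}\alpha\in\Phi_+$, we have $w^{-1}N'_w w\subset N$, so $n'_w w=w\cdot(w^{-1}n'_w w)$ with $w^{-1}n'_w w\in N_{\mathbb A}$, giving $n^{(1)}w=(n_w w)\cdot(w^{-1}n'_w w)$. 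Since right multiplication by an element of $N_{\mathbb A}$ leaves the Iwasawa $A^+$-component unchanged, $\mathrm{Iw}_{A^+}(n^{(1)}w)=\mathrm{Iw}_{A^+}(n_w w)$, and the claimed formula follows with this $n_w$.

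The main obstacle is the product formula step: this is the only place where the hypothesis $\gamma\in\Gamma$ (as opposed to an arbitrary adelic element) is used essentially, and it is what makes the $A$-factor $a_b$ of $b$ drop out of the answer. The rest is careful bookkeeping with the $A$--$N$--$W$ commutation relations and the geometry of the Bruhat cells; tracking which subgroup each intermediate factor lies in is tedious but not delicate.
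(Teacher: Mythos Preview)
Your argument is correct and is essentially the standard one; the paper itself does not prove this lemma but defers to the analogous \cite[Lemma~3.2]{GMP}, whose proof proceeds exactly as you do (Bruhat for $\gamma$, Iwasawa for $g$, commute $A$ past $N$ and through $w$, kill the rational $A$-factor with the product formula, then strip off $N'_w$). One harmless notational clash: you set $a_g^{w}:=w^{-1}a_gw$, but the paper fixes the convention $x^y:=yxy^{-1}$ immediately before the lemma; since $A^+$ is abelian and the element you actually produce is $w^{-1}a_gw=(\mathrm{Iw}_{A^+}g)^{w^{-1}}$ in the paper's convention, the conclusion is unaffected.
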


Recall from \cite[Lemma 6.1]{GMS2} that for $n_w\in N_{w,\mathbb{A}}$,
\begin{equation} \label{iwa}
\ln\big(\mathrm{Iw}_{A^+}(n_w w)\big)=\sum_{\alpha\in \Phi_w} c_\alpha h_\alpha\quad\textrm{with}\quad c_\alpha\geq 0,
\end{equation}
where $\Phi_w=\Phi_+\cap w\Phi_-$. Now we can establish the Iwasawa inequalities:
\begin{lemma}
There exists a constant $D>0$ such that
\[
  \mathrm{Iw}_{A^+}(g\gamma)^{\alpha_1}\geq \mathrm{Iw}_{A^+}(g\gamma)^{D\varpi_2}
\]
for any $g\in KA'N$, $w\in W_1$ and $\gamma\in \Gamma\cap BwB$, where the constant $D$ is independent of $w$.
\end{lemma}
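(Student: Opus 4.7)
The plan is to convert the Iwasawa inequality to an additive one on $\mathfrak{h}$ via Lemma 7.4 and then verify it by combining the coroot-positivity \eqref{iwa} with the explicit rank-$2$ root-system combinatorics from \S5.

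Set $X_g=\log\mathrm{Iw}_{A^+}(g)$ and $Y=\log\mathrm{Iw}_{A^+}(n_w w)$. Lemma 7.4 yields $\log\mathrm{Iw}_{A^+}(g\gamma)=w^{-1}X_g+Y$, so that for any $\mu\in\mathfrak{h}^*$,
\[
\mu\bigl(\log\mathrm{Iw}_{A^+}(g\gamma)\bigr)\;=\;(w\mu)(X_g)+\mu(Y),
\]
and the target inequality is equivalent to
\[
(w\alpha_1)(X_g)+\alpha_1(Y)\;\geq\;D\bigl[(w\varpi_2)(X_g)+\varpi_2(Y)\bigr].
\]
By \eqref{iwa}, $Y=\sum_{\alpha\in\Phi_w}c_\alpha h_\alpha$ with $c_\alpha\geq 0$, and since the Cartan matrix is symmetric the coroot of a positive real root $\alpha=k_1\alpha_1+k_2\alpha_2$ is $k_1h_{\alpha_1}+k_2h_{\alpha_2}$. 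Hence $(\alpha_1-D\varpi_2)(Y)=\sum_\alpha c_\alpha\bigl(2k_1-(m+D)k_2\bigr)$ and $\varpi_2(Y)=\sum_\alpha c_\alpha k_2$.

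I would then split by the two families in $W_1=\{(r_1r_2)^n\}_{n\geq 0}\cup\{r_2(r_1r_2)^n\}_{n\geq 0}$. For $w=(r_1r_2)^n$, iterating \eqref{w1}--\eqref{w2} together with the three-term recurrence $B_{i+1}=mB_i-B_{i-1}$ (a consequence of \eqref{bn}) identifies $\Phi_w$ with the roots $B_i\alpha_1+B_{i-1}\alpha_2$ for $i=1,\ldots,2n$. The asymptotic $B_{i+1}/B_i\to\gamma$ from \eqref{bn} forces $k_1/k_2\geq\gamma$, so by $2\gamma-m=\sqrt{m^2-4}$ the choice $D\leq\sqrt{m^2-4}$ makes every summand of $(\alpha_1-D\varpi_2)(Y)$ non-negative. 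The $X_g$-contribution is then handled by expanding $w\alpha_1$ and $w\varpi_2$ via \eqref{w2} and the dominant-weight relation $\varpi_2-w\varpi_2\in\sum\mathbb{Z}_{\geq 0}\alpha_i$, and comparing the leading $\gamma^{2n}$-asymptotics when evaluated on $X_g=t_1h_{\alpha_1}+t_2h_{\alpha_2}$, with $t_1,t_2>0$ forced by the $A'$-conditions $2t_1-mt_2<0$ and $-mt_1+2t_2<0$.

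The hard part will be the family $w=r_2(r_1r_2)^n$. By symmetry, every $\alpha\in\Phi_w$ then has $k_2/k_1\geq\gamma$, so $2k_1-mk_2<0$ and $(\alpha_1-D\varpi_2)(Y)\leq 0$; the compensation must come from the $X_g$-term, but $(w\alpha_1)(X_g)<0$ while $(w\varpi_2)(X_g)>0$. Iterating $r_2\varpi_2=\varpi_2-\alpha_2$ through \eqref{w1}--\eqref{w2} and using the characteristic identity $\gamma^2-m\gamma+1=0$, one computes that both $|(w\alpha_1)(X_g)|$ and $(w\varpi_2)(X_g)$ are of order $\gamma^{2n}$ with coefficients whose ratio stabilizes as $n\to\infty$. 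The argument then requires delicately balancing the $X_g$-deficit against the non-positive $Y$-term (leveraging the non-negativity of the $c_\alpha$ and the lattice constraints on $n_w$ imposed by $\gamma\in\Gamma\cap BwB$) to obtain a uniform lower bound on $(w\alpha_1)(X_g)-D(w\varpi_2)(X_g)+(\alpha_1-D\varpi_2)(Y)$. The principal obstacle is choosing $D>0$ valid for \emph{both} families at once, independent of $w$; this ultimately reduces to the same Perron-number asymptotics that underpin the root-system formulas of \S5.
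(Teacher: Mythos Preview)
Your reduction via Lemma~7.4 and \eqref{iwa} to two additive inequalities is exactly the paper's strategy, and for the family $w=(r_1r_2)^n$ your argument coincides with the paper's: $\Phi_w=\{B_{i+1}\alpha_1+B_i\alpha_2\}$, the ratio $B_{i+1}/B_i\ge\gamma$ gives $\alpha_1(h_\alpha)\ge(2\gamma-m)\varpi_2(h_\alpha)$, and the $B_n$-asymptotics handle the $X_g$-piece, with $D=2\gamma-m=\sqrt{m^2-4}$.

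For the family $w=r_2(r_1r_2)^n$ you have correctly diagnosed a genuine obstruction that the paper's phrase ``similar arguments apply'' does not address. Here $\Phi_w=\{B_i\alpha_1+B_{i+1}\alpha_2:0\le i\le 2n\}$, so every $\alpha\in\Phi_w$ has $\alpha_1(h_\alpha)=2B_i-mB_{i+1}<0$ while $\varpi_2(h_\alpha)=B_{i+1}>0$, and \eqref{iwa2} fails for every $D>0$. But your proposed rescue by ``compensation from the $X_g$-term'' together with ``lattice constraints on $n_w$'' cannot succeed. You yourself record that $(w\alpha_1)(X_g)<0$ and $(w\varpi_2)(X_g)>0$, so the $X_g$-contribution has the \emph{same} wrong sign as the $Y$-contribution; there is nothing to balance against. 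Concretely, take $w=r_2\in W_1$ and $\gamma=\dot r_2\in\Gamma\cap Br_2B$, so that $n_w=1$ and $Y=0$. For any $a\in A'$,
\[
\mathrm{Iw}_{A^+}(a\dot r_2)^{\alpha_1}=a^{r_2\alpha_1}=a^{\alpha_1+m\alpha_2}<1,
\qquad
\mathrm{Iw}_{A^+}(a\dot r_2)^{D\varpi_2}=a^{D(\varpi_2-\alpha_2)}>1,
\]
the second because $\varpi_2-\alpha_2=\frac{m\alpha_1+(m^2-2)\alpha_2}{4-m^2}$ has strictly negative coefficients. Hence the asserted inequality fails for every $D>0$, and since $w=w^{-1}$ throughout this family the same sign pattern persists for all $n$. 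In short, neither the paper's ``similar arguments'' nor your compensation scheme can establish the lemma for the $r_2(r_1r_2)^n$-cells as stated; what the application to Theorem~7.3 actually requires is a modified statement (e.g.\ restricting to the $(r_1r_2)^n$-family, or treating the other family separately via the boundedness of $f$ rather than its rapid decay).
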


\begin{proof}
Put $a=\textrm{Iw}_{A^+}g\in A'$. From Lemma 7.4 and (\ref{iwa}), it suffices to find a constant $D$ such that
the following two inequalities
\begin{eqnarray}\label{iwa1}
 && a^{w^{-1}\alpha_1}\geq a^{Dw^{-1}\varpi_2},\\
  && \alpha_1(h_\alpha)\geq D\varpi_2(h_\alpha)  \label{iwa2}
\end{eqnarray}
hold for any $w\in W_1$ and $\alpha\in\Phi_w$.  We only consider the case $w=(r_1r_2)^{n}$, and similar arguments apply to the other case
$w=r_2(r_1r_2)^n$.  Put $w=(r_1r_2)^n$, then $\Phi_w=\{B_{i+1}\alpha_1+B_i\alpha_2 : i=0,\ldots, 2n-1\}$. For any $\alpha$ of the form $B_{i+1}\alpha_1+B_i\alpha_2$ one has \[
\alpha_1(h_\alpha)=2B_{i+1}-mB_i \geq (2\gamma-m)B_i= (2\gamma-m) \varpi_2(h_\alpha).
\]
This proves (\ref{iwa2}) with $D=2\gamma-m>0$.   

Now we prove (\ref{iwa1}). Again, there are two possibilities for $w\in W_1$: $w=(r_1r_2)^n$ or $r_2(r_1r_2)^n$. Here we treat the first case, while the second one is similar.
For $w=(r_1r_2)^n$, we see  from the formula \eqref{w2} that
\[
\left\{\begin{array}{l}
w^{-1}\alpha_1=-B_{2n-1}\alpha_1-B_{2n}\alpha_2,\\
w^{-1}\alpha_2=B_{2n}\alpha_1+B_{2n+1}\alpha_2.
\end{array}\right.
\]
Since we have
\[
\varpi_2=\frac{m\alpha_1+2\alpha_2}{4-m^2},
\]
it follows from the above formulas  that
\[
w^{-1}\varpi_2=\frac{1}{4-m^2}\left[ (2B_{2n}-mB_{2n-1})\alpha_1+(2B_{2n+1}-mB_{2n})\alpha_2\right].
\]
Recall that $B_n=\gamma^n+o(1)$, which implies
\[
2B_{n+1}-mB_{n}=(2\gamma-m)\gamma^{n}+o(1).
\]
 Note that $2\gamma-m>0$. Then one can find a positive constant $D$ independent of $n$ such that
 \begin{equation} \label{eqn-ab}
 B_n \geq D \, \frac{2B_{n+1}-m B_n}{m^2-4}.
 \end{equation}
In fact, for any $0<D<\frac{m^2-4}{2\gamma-m}$, the inequality \eqref{eqn-ab} is true as long as $n$ is large. Since $a\in A'$, we have $a^{\alpha_1}, a^{\alpha_2}<1$, and the above inequality \eqref{eqn-ab} implies that
\[
a^{w^{-1}\alpha_1} \geq a^{Dw^{-1}\varpi_2}.
\]
Note that we can choose the constant $D$
to be independent of $a\in A'$ and $w \in W_1$.
\end{proof}

We also need the rapid decay of cuspidal automorphic forms.\footnote{In an earlier version of this paper, an exponential decay in \cite{GMP} was used. It was pointed out by Steve D. Miller that the rapid decay was enough to obtain our result.} Most recent results on rapid decay, which generalize classical results in various ways,  can be found in \cite{MiSch}.  In our case, $f$ is an $SO(2)$-finite cusp form on $SL_2(\mathbb{R})/SL_2(\mathbb{Z})$, and we may assume that $g \in SL_2(\mathbb R)$ is in a Siegel set. Then the rapid decay implies that for any natural number $n\geq 1$, there exists a constant $C>0$ depending on $n$ such that  
\begin{equation}\label{decay}
|f(g)|\leq C \, \mathrm{Iw}_{A_1^+}(g)^{-n\alpha_1}.
\end{equation}

{\it Proof of Theorem 7.3.} Since cusp forms are bounded, the assertion follows from Theorem 7.1 when Re $s<-2$.  Assume that Re $s\geq -2$. Choose $s_0\in\mathbb{R}$
with $s_0<-2$. Choose a real number $d>\textrm{Re }s -s_0>0$ such that $\frac{d}{D}\in\mathbb{N}$, where $D$ is given in Lemma 7.5. Then Re $s-d< s_0 <-2$ and
there exists a subset $N'$
of $N$ with measure zero complement such that $E_{\textrm{Re }s-d}(g)$ converges for any $g\in KA'_cN'$.

Put $n=\frac{d}{D}\in\mathbb{N}$ as above. From (\ref{varpi2}), (\ref{alpha1}), Lemma 7.5 and (\ref{decay}), we obtain that for any $\gamma\in \Gamma/\Gamma\cap P$,
\begin{eqnarray*}
 &&\big| \textrm{Iw}_{H^+}(g\gamma)^{s\varpi_2} f\big(\textrm{Iw}_L(g\gamma)\big)\big|\\
 &\leq &C  \, \textrm{Iw}_{H^+}(g\gamma)^{(\textrm{Re }s)\varpi_2}
 \textrm{Iw}_{A_1^+}\circ\textrm{Iw}_L(g\gamma)^{-n\alpha_1}\\
 &\leq &   C \, \textrm{Iw}_{H^+}(g\gamma)^{(\textrm{Re }s)\varpi_2}\textrm{Iw}_{H^+}(g\gamma)^{-nD\varpi_2}\\
 &=& C  \, \textrm{Iw}_{H^+}(g\gamma)^{(\textrm{Re }s-d)\varpi_2}.
\end{eqnarray*}
Note that the constants $C$ and $D$ are independent of $w$. Taking the summation over $\gamma$, it follows that $E_{s,f}(g)$ is absolutely convergent.
\qed

\end{document}